\newcommand{\dd}{\mathrm{d}}
\newcommand{\dist}{\mathrm{dist}}
\newcommand{\hess}{\operatorname{Hess}}
\newcommand{\spa}{\operatorname{span}}
\newcommand{\R}{\mathds{R}}
\newcommand{\Z}{\mathds{Z}}
\newcommand{\C}{\mathds{C}}
\newcommand{\Ric}{\operatorname{Ric}}
\newcommand{\SO}{\mathsf{SO}}
\newcommand{\SU}{\mathsf{SU}}
\newcommand{\oc}{\sec^\perp}
\renewcommand{\v}{\mathcal{V}}
\newcommand{\h}{\mathcal{H}}
\newcommand{\id}{\operatorname{Id}}
\newcommand{\gr}{Gr_2TM}
\newcommand{\vol}{\,\mathrm{vol}}
\newtheorem{theorem}{Theorem}[]
\newtheorem*{thm}{Theorem}
\newtheorem{proposition}[theorem]{Proposition}
\newtheorem{corollary}[theorem]{Corollary}
\theoremstyle{definition}
\theoremstyle{remark}
\newtheorem{remark}[theorem]{Remark}
\title{Positive biorthogonal curvature on $S^2\times S^2$}
\author{Renato G. Bettiol}
\numberwithin{equation}{section}
\numberwithin{theorem}{section}
\address{University of Notre Dame \newline\indent Department of Mathematics \newline\indent 255 Hurley Building \newline\indent Notre Dame, IN, 46556-4618, USA}
\email{rbettiol@nd.edu}
\date{\today}
\thanks{The author is partially supported by the NSF grant DMS-0941615.}
\subjclass[2010]{Primary: 53C20, 53C21; Secondary: 53B21}
\begin{document}
\begin{abstract}
We prove that $S^2\times S^2$ satisfies an intermediate condition between $\Ric>0$ and $\sec>0$. Namely, there exist metrics for which the average of the sectional curvatures of any two planes tangent at the same point, but separated by a minimum distance in the $2$-Grassmannian, is strictly positive; and this can be done with an arbitrarily small lower bound on the distance between the planes considered. Although they have positive Ricci curvature, these metrics do not have nonnegative sectional curvature. Such metrics also have positive \emph{biorthogonal curvature}, meaning that the average of sectional curvatures of any two orthogonal planes is positive. 
\end{abstract}

\maketitle

\section{Introduction}

Let $(M,g)$ be a $4$-dimensional Riemannian manifold. For each plane $\sigma\subset T_pM$ at a point $p\in M$, denote by $\sigma^\perp$ the orthogonal plane to $\sigma$, i.e., $\sigma\oplus\sigma^\perp=T_pM$ is a $g$-orthogonal direct sum. Define the \emph{biorthogonal (sectional) curvature} of $\sigma$ as the average of the sectional curvatures of $\sigma$ and $\sigma^\perp$, i.e.,
\begin{equation*}
\oc_g(\sigma):=\tfrac12\big(\sec_g(\sigma)+\sec_g(\sigma^\perp)\big).
\end{equation*}
The Hopf Conjecture, that asks if $S^2\times S^2$ admits a metric with $\sec>0$, remains one of the most intriguing open problems in Riemannian geometry. With the standard product metric $g_0$, at every point $p\in S^2\times S^2$ there exists $\sigma\subset T_pM$ with $\oc_{g_0}(\sigma)=0$. Namely, any \emph{mixed plane} $\sigma$ at $p$ (i.e., spanned by vectors of the form $(X,0)$ and $(0,Y)$) is such that $\sigma^\perp$  is also a mixed plane, hence $\sec_{g_0}(\sigma)=\sec_{g_0}(\sigma^\perp)=0$.
A natural question in this context is if the weaker condition $\oc>0$ can be satisfied in $S^2\times S^2$ \cite{ezio}.

The goal of this note is to give a positive answer, also covering a stronger curvature positivity condition, that can be defined in any dimension. Namely, choose a distance (inducing the standard topology) on the Grassmannian bundle $\gr$ of planes tangent to $M$, and for each $\theta>0$ and $\sigma\subset T_pM$, let
\begin{equation*}
\sec^\theta_g(\sigma):=\min_{\substack{\sigma'\subset T_pM \\ \dist(\sigma,\sigma')\geq\theta}}\tfrac12\big(\sec_g(\sigma)+\sec_g(\sigma')\big).
\end{equation*}

\begin{thm}
For every $\theta>0$, there exist Riemannian metrics $g^\theta$ on $S^2\times S^2$ with $\sec^{\theta}_{g^\theta}>0$, arbitrarily close to the standard product metric $g_0$ in the $C^k$-topology, $k\geq0$. In particular, $S^2\times S^2$ admits metrics of positive biorthogonal curvature.
\end{thm}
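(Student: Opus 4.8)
The plan is to realize $g^\theta$ as a small perturbation $g^\theta=g_0+\varepsilon\,h_\theta$ of the product metric, for a symmetric $2$-tensor $h_\theta$ to be constructed and $\varepsilon=\varepsilon(\theta)>0$ small. The $C^k$-closeness is then automatic, and since $\Ric_{g_0}=g_0$ is positive definite one gets $\Ric_{g^\theta}>0$ for small $\varepsilon$, so everything reduces to arranging $\sec^\theta_{g^\theta}>0$. Fix a distance on $\gr$ and let $P_\theta$ be the compact set of pairs $(\sigma,\sigma')$ of planes at a common point with $\dist(\sigma,\sigma')\geq\theta$, so that $\sec^\theta_{g^\theta}>0$ is the statement $F_\varepsilon:=\sec_{g^\theta}(\sigma)+\sec_{g^\theta}(\sigma')>0$ on $P_\theta$. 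As $\sec_{g_0}\geq0$ on $S^2\times S^2$, we have $F_0\geq0$ on $P_\theta$, with equality exactly on the compact set $Z$ of pairs both of whose members are \emph{mixed} planes, i.e.\ spanned by one vector tangent to each $S^2$-factor. Hence it is enough to choose $h_\theta$ so that the first-order term $\dot F:=\tfrac{d}{d\varepsilon}\big|_{0}F_\varepsilon$ is strictly positive on $Z$: then $\dot F>0$ on a neighborhood $U$ of $Z$ in $P_\theta$ while $F_0\geq c_0>0$ on the compact complement $P_\theta\setminus U$, and the uniform estimate $|F_\varepsilon-F_0-\varepsilon\dot F|\leq C\varepsilon^2$ on $P_\theta$ forces $F_\varepsilon>0$ for all small $\varepsilon$.

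Everything therefore comes down to the first variation of sectional curvature along mixed planes. If $\sigma=\operatorname{span}\{X,Y\}$ with $X$, $Y$ unit vectors tangent to the first and second factor, then $\sec_{g_0}(\sigma)=0$, so differentiating $R(X,Y,Y,X)/|X\wedge Y|^2$ (with $X,Y$ a $g_0$-orthonormal basis of $\sigma$) gives $\dot{\sec}(\sigma)=\dot R(X,Y,Y,X)$, where $\dot R$ is the linearized curvature tensor in the direction $h_\theta$. Parametrizing a mixed plane at a point by the angles $\alpha,\beta\in\R/\pi\Z$ of its two legs, $\dot{\sec}$ becomes a trigonometric polynomial $q(\alpha,\beta)$ of degree one in each of $2\alpha$, $2\beta$; moreover two mixed planes are at $\gr$-distance comparable to $(d(\alpha,\alpha')^2+d(\beta,\beta')^2)^{1/2}$. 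So one needs $h_\theta$ for which, at every point,
\[
q(\alpha,\beta)+q(\alpha',\beta')>0\qquad\text{whenever}\qquad d(\alpha,\alpha')^2+d(\beta,\beta')^2\gtrsim\theta^2 .
\]
The target shape is $q(\alpha,\beta)=C-\cos2\alpha-\cos2\beta$ with $2-\kappa\theta^2<C<2$: the inequality then reduces to bounding $\cos2\alpha+\cos2\beta+\cos2\alpha'+\cos2\beta'$ under the separation constraint, which an elementary optimization controls by $4-2\kappa\theta^2$ for a suitable $\kappa>0$, giving $q+q'\geq2C-4+2\kappa\theta^2>0$. Here $\min q=C-2<0$, so $\sec_{g^\theta}$ takes negative values and these metrics do not have $\sec\geq0$; and $C$ must approach $2$ as $\theta\to0$, since if $q>0$ on all mixed planes then $\sec_{g^\theta}>0$ everywhere would follow, contradicting the open Hopf problem — this is exactly why $g^\theta$ must depend on $\theta$.

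It is convenient to read this through the curvature operator. Under $\Lambda^2T_pM=\Lambda^+\oplus\Lambda^-$, a plane $\sigma$ with unit decomposable $2$-vector $\omega=\omega^++\omega^-$ has $|\omega^\pm|^2=\tfrac12$ and $\sec(\sigma)+\sec(\sigma^\perp)=2\langle\mathcal R^+\omega^+,\omega^+\rangle+2\langle\mathcal R^-\omega^-,\omega^-\rangle$; since the unit vectors $\sqrt2\,\omega^\pm$ range independently over $\Lambda^\pm$, the condition $\oc>0$ is equivalent to $\lambda_{\min}(\mathcal R^+)+\lambda_{\min}(\mathcal R^-)>0$ pointwise (a version of Thorpe's trick). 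For $g_0$ one finds $\mathcal R^\pm=\operatorname{diag}(1,0,0)$, each with a two-dimensional kernel, namely the $\Lambda^\pm$-projections of the mixed $2$-vectors; the displayed inequality is precisely the requirement that, after perturbation, the eigenvalues splitting off these two kernels compensate one another, uniformly over the point and over all separations $\geq\theta$, not merely for orthogonal pairs.

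The remaining and principal task is to exhibit $h_\theta$ whose linearized curvature realizes the first variation demanded above. I would look for $h_\theta$ adapted to the diagonal $\SO(3)$-symmetry of $g_0$ — under which $S^2\times S^2$ is cohomogeneity one, so that $q$ depends only on the angle between the two base points — and built from the product structure by deforming the factors against one another, e.g.\ a warping/Cheeger-type term producing the $-\cos2\alpha-\cos2\beta$ shape together with a correction raising its mean toward the required $C$ while keeping $q$ negative only on a controlled set of mixed planes. The delicate points are: (i) engineering the linearized curvature operator to have exactly this sign pattern — positive on average over the mixed $\Lambda^\pm$-directions but not positive definite — compatibly with the global obstructions (Gauss–Bonnet, the vanishing signature of $S^2\times S^2$) that forbid the $\theta=0$ case; (ii) making the estimate uniform over all points, including across the two singular orbits of the $\SO(3)$-action; and (iii) passing from the Euclidean model of the $\gr$-distance to the genuine one and carrying the trigonometric optimization through, together with the easy companion fact that $F_0$ is bounded below on pairs with at least one non-mixed plane.
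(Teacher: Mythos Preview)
Your setup and compactness reduction are correct: it suffices to find a symmetric tensor $h$ whose linearized curvature satisfies $\dot{\sec}(\sigma)+\dot{\sec}(\sigma')>0$ on every pair of mixed planes at distance $\geq\theta$. But the construction of $h_\theta$---which you yourself call ``the remaining and principal task''---is not carried out. You only state a desideratum (the target shape $q(\alpha,\beta)=C-\cos 2\alpha-\cos 2\beta$ with $2-\kappa\theta^2<C<2$) and suggest searching among $\SO(3)$-invariant tensors. This is precisely where the content of the proof lies, and there is a concrete obstacle to the specific target you propose: if $h$ is $\SO(3)$-invariant, then at a singular point $(p_1,\pm p_1)$ the $\SO(2)$-isotropy forces $q$ to depend only on $\alpha-\beta$. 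Hence $q$ is constant along diagonals of the mixed-plane torus, and the planes $(\alpha,\beta)=(0,0)$ and $(\pi/2,\pi/2)$ are orthogonal yet share the same $q$-value; the condition $q+q'>0$ then forces $q>0$ on \emph{all} mixed planes at those points, which is incompatible with your target $\min q=C-2<0$ there. So either the invariance or the target shape must be abandoned, and you have not indicated how.

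The paper circumvents this global problem by a genuinely different, two-step construction. First it applies the \emph{full nonlinear} Cheeger deformation along the diagonal $\SO(3)$-action (Section~\ref{sec:cheeger}); this preserves $\sec\geq 0$ and, crucially, leaves a \emph{unique} flat plane at every regular point and only a circle of flat planes at $\pm\Delta S^2$. Thus $\sec^\theta=0$ occurs only along the two singular orbits, and every flat plane there has a component normal to the orbit. The second step is then a purely \emph{local} first-order conformal deformation $h=\phi\,g$, with $\phi$ essentially minus the squared distance to $\pm\Delta S^2$ (Proposition~\ref{prop:final}); formula~\eqref{eq:hess} makes $\tfrac{d}{ds}\sec_{g_s}(\sigma)\big|_{s=0}>0$ immediate on those planes, and the compactness argument (your Proposition, the paper's Proposition~\ref{prop:cpct}) finishes. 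The division of labor is the essential idea: the Cheeger step---whose curvature gain is in general of third order in $t$, cf.\ Proposition~\ref{prop:mueter}(ii), hence not visible to any linearization at $g_0$---concentrates the bad set onto a submanifold to which all remaining flat planes are transverse, so that the subsequent first-order correction becomes a one-line construction. Your single linearization at $g_0$ enjoys no such localization, and you have not shown that a global $h$ with the required sign pattern exists.
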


The condition $\sec^\theta_g>0$ means that at every point $p\in M$, the average of sectional curvatures of any two planes $\sigma_1,\sigma_2\subset T_pM$ that are at least $\theta>0$ apart from each other is positive. One can intuitively think of $\theta$ as a lower bound for the ``angle'' between the planes considered. Notice that if $\theta_1<\theta_2$, then $\sec^{\theta_1}_g>0$ clearly implies $\sec^{\theta_2}_g>0$. Furthermore, for every metric $g$ on $M$, there exists $\theta_g>0$ such that if $\sec^{\theta}_g>0$ for some $0<\theta\leq\theta_g$, then $\Ric_g>0$, see Proposition~\ref{prop:ricci}. In particular, for $4$-manifolds, if $\theta\leq\min_{p\in M,\,\sigma\subset T_pM}\dist(\sigma,\sigma^\perp)$, then $\sec^\theta_g>0$ implies $\oc_g>0$.

The construction of $g^\theta$ is so that these metrics converge to a limit metric $g^0$ as $\theta\to 0$ (possibly different from the product metric), in the $C^k$-topology, for any $k\geq0$. This convergence easily implies that, for $\theta>0$ sufficiently small, the metrics $g^\theta$ have positive Ricci curvature (Proposition~\ref{prop:ricci}) and positive biorthogonal curvature. In particular, the above theorem shows that a natural interpolating condition between $\Ric>0$ and $\sec>0$ is satisfied on $S^2\times S^2$.

We stress that $\oc>0$ alone does not imply $\Ric>0$, as illustrated by $S^1\times S^3$ with the standard product metric. This metric clearly has $\oc>0$, but since $S^1\times S^3$ has infinite fundamental group, it does not support any metrics with $\Ric>0$. Thus, in order to have a condition of this type that is stronger than $\Ric>0$, it is crucial that $\sec^\theta>0$ can be satisfied no matter how small $\theta>0$, and that the corresponding metrics converge.
In general, $\oc>0$ only implies positive scalar curvature, which poses some topological restrictions on $4$-manifolds (e.g., vanishing of all the Seiberg-Witten invariants), but these restrictions are by far not as strong as the ones implied by $\sec>0$ or $\Ric>0$. In particular, although $\oc>0$ is comparatively flexible, generic smooth $4$-manifolds do not support metrics with this property. Another indication of this relative flexibility of $\oc>0$ is that $\C P^2\#\overline{\C P}^2$ also admits metrics with this property (Proposition~\ref{prop:cp2}); while, similarly to $S^2\times S^2$, it remains an open question whether it admits a metric with $\sec>0$.

Our metrics $g^\theta$ with $\sec^\theta>0$ on $S^2\times S^2$ can be chosen invariant under the antipodal action of $\Z_2\oplus \Z_2$. Thus, for all $\theta>0$, the quotient $\R P^2\times \R P^2$ also admits metrics with $\sec^\theta>0$, arbitrarily close to the standard product metric. This illustrates a remarkable difference between $\sec^\theta>0$ (in particular, $\oc>0$) and $\sec>0$ since, by Synge's Theorem, $\R P^2\times \R P^2$ cannot have a metric with $\sec>0$. It is, however, somewhat expected that obstructions of Synge type do not detect these average curvature conditions, since even finiteness of the fundamental group goes unnoticed. 
We also remark that $(S^2\times S^2,g^\theta)$ has many points with planes of zero curvature (and even negative curvature), however any two such planes are always within distance $\theta$ from one another in the Grassmannian of planes tangent at that point.
In this way, $\theta$ corresponds to a measure of how big the regions formed by planes with nonpositive curvature can be in the Grassmannian. It would be interesting to know if metrics with $\sec^\theta>0$ on $S^2\times S^2$ can also be constructed while keeping $\sec\geq0$, as this could give a quantitative insight on the possibility of existence of quasipositively curved metrics.

The techniques used to construct all of the above metrics are (smooth) deformations.
Metric deformations to improve curvature have a long history, stemming from Berger and his students in the 1970's to the recent construction proposed by Petersen and Wilhelm \cite{pw,pwsphere} of a positively curved exotic sphere. Of particular importance in the present note are techniques developed by M\"uter~\cite{mueter} and Strake~\cite{strake,straket}, respectively regarding Cheeger deformations and deformations positive of first-order. The \emph{Cheeger deformation} is a method to attempt to increase curvature on nonnegatively curved manifolds with symmetries, by shrinking the metric in the direction of orbits of a large isometry group. This technique
was introduced by Cheeger~\cite{cheeger}, inspired by the construction of Berger metrics on odd-dimensional spheres, where the round metric is shrunk in the direction of the Hopf fibers. M\"uter~\cite{mueter} carried out a systematic study of Cheeger deformations in his PhD thesis under W. Meyer, establishing ground for a much better understanding of these deformed metrics. Strake~\cite{straket}, another PhD student of W. Meyer during the same period, studied metric deformations of nonnegatively curved metrics for which the first variation of the sectional curvature of any zero curvature plane is positive. These deformations are called \emph{positive of first-order}, and if the manifold is compact, they yield actual positively curved metrics. They also observed that, in this infinitesimal sense, Cheeger deformations are \emph{nonnegative} of first-order. 

Our deformation process from the product metric $g_0$ to a metric with $\sec^\theta>0$ has two steps, in which the above techniques are combined. The first is a Cheeger deformation, described in detail by M\"uter~\cite{mueter,zillermueter}. More precisely, we consider the cohomogeneity one diagonal $\SO(3)$-action on $S^2\times S^2$ and shrink $g_0$ in the direction of the orbits. This deformation gives a family of metrics $g_t$, $t>0$, with $\sec_{g_t}\geq0$ and much fewer planes of zero curvature than $g_0$. Namely, $(S^2\times S^2,g_t)$ has a circle's worth of zero curvature planes on points that lie on the diagonal or the anti-diagonal $\pm\Delta S^2=\{(p,\pm p):p\in S^2\}\subset S^2\times S^2$, and a unique zero curvature plane at any other point. This means that $\sec^\theta\geq0$, and equality holds only for some planes whose base point is in one of the submanifolds $\pm\Delta S^2$ (Proposition~\ref{prop:geomcheeger}).
Next, for fixed $t>0$, set $g:=g_t$. The second step is to employ a first-order local conformal deformation $g_s=g+s\,h$, where $h=\phi\, g$, and $\phi$ is supported in a tubular neighborhood of $\pm\Delta S^2$. Given the geometry of $(S^2\times S^2,g)$, we construct $\phi$ such that the first derivative with respect to $s$ of the average of two $g_s$-sectional curvatures is positive (Proposition~\ref{prop:final}). The function $\phi$ is proportional to the squared $g$-distance to $\pm\Delta S^2$, multiplied by a cutoff function. The strategy for such a construction is adapted from Strake~\cite{strake,straket}. Finally, a standard compactness argument (Proposition~\ref{prop:cpct}) implies that $\sec^\theta_{g_s}>0$ for all sufficiently small $s>0$, proving the desired result.

This paper is organized as follows. In Section~\ref{sec:cheeger}, we review basic aspects of Cheeger deformations, following M\"uter~\cite{mueter,zillermueter}. We describe the metric on $S^2\times S^2$ obtained by a Cheeger deformation with respect to the diagonal $\SO(3)$-action in terms of $\sec^\theta$. In Section~\ref{sec:firstorder}, we analyze the effects of a first-order deformation and construct the variation starting from the Cheeger deformed metric that proves the above Theorem. Some remarks on the geometry of the constructed metrics are given in Section~\ref{sec:remarks}. Finally, we briefly discuss $4$-manifolds with positive biorthogonal curvature (including the construction for $\C P^2\#\overline{\C P}^2$) in Section~\ref{sec:4mflds}.

\smallskip
\noindent
{\bf Acknowledgement.} It is a pleasure to thank Fernando Galaz-Garc\'ia, Karsten Grove, Paolo Piccione and Wolfgang Ziller for valuable comments and suggestions during the elaboration of this paper. We also express our sincere gratitude to the referee for the careful reading of the manuscript and constructive criticism.

\section{First step: Cheeger deformation}\label{sec:cheeger}

Although the techniques used in this section are mostly available elsewhere in the literature, see \cite{mueter,zillermueter,zillersurvey}, we briefly recall a few basic aspects as a service to the reader. For convenience, we use the same notation as the above references.

\subsection{Cheeger deformation}
Let $(M,g)$ be a Riemannian manifold and $\mathsf G$ a compact Lie group that acts on $M$ by isometries. The \emph{Cheeger deformation} of $g$ is a $1$-parameter family $g_t$, $t\geq0$, of $\mathsf G$-invariant metrics on $M$, defined as follows. Let $Q$ be a bi-invariant metric on $\mathsf G$, and endow $M\times\mathsf G$ with the product metric $g+\tfrac1t Q$. Consider the submersion
\begin{equation}
\rho\colon M\times\mathsf G\to M, \quad \rho(p,h)=h^{-1}p,
\end{equation}
and define $g_t$ as the metric on $M$ that turns $\rho$ into a Riemannian submersion. The family of metrics $g_t$ extends smoothly across $t=0$, with $g_0=g$, thus providing a deformation of such metric. Since $\sec_Q\geq0$, it follows immediately from the Gray-O'Neill formula that, if $\sec_{g_0}\geq0$, then also $\sec_{g_t}\geq0$, $t\geq0$. As we will see, many planes with zero curvature with $g_0$ usually gain positive curvature with $g_t$. 

For each $p\in M$, denote by $\mathsf G_p$ the isotropy group at $p$ and by $\mathfrak g_p$ its Lie algebra. Fix the $Q$-orthogonal splitting $\mathfrak g=\mathfrak g_p\oplus\mathfrak m_p$, and identify $\mathfrak m_p$ with the tangent space $T_p\mathsf G(p)$ to the $\mathsf G$-orbit through $p$ via action fields. More precisely, we identify $X\in\mathfrak m_p$ with $X^*_p=\frac{\dd}{\dd s}\exp(sX)p\big|_{s=0}\in T_p\mathsf G(p)$. This determines a $g_t$-orthogonal splitting
$T_pM=\v_p\oplus\h_p$ in \emph{vertical space} $\v_p:=T_p\mathsf G(p)=\{X^*_p:X\in\mathfrak m_p\}$ and \emph{horizontal space} $\h_p:=\{v\in T_pM : g_t(v,\v_p)=0\}$. Notice that the dimensions of $\v_p$ and $\h_p$ may vary with $p\in M$, hence these are not distributions.

Let $P_t\colon\mathfrak m_p\to\mathfrak m_p$ be the $Q$-symmetric automorphism that relates the metrics $Q$ and $g_t$, i.e., such that
\begin{equation}\label{eq:p}
Q(P_t(X),Y)=g_t(X^*_p,Y^*_p), \quad X,Y\in\mathfrak m_p.
\end{equation}
It is an easy computation that $P_t$ is determined by $P_0$ in the following way:
\begin{equation}\label{eq:pt}
P_t=(P_0^{-1}+t\id)^{-1}=P_0\,(\id+tP_0)^{-1}, \quad t\geq0,
\end{equation}
see \cite[Prop 1.1]{zillermueter}.
Thus, if we let $C_t\colon T_pM\to T_pM$ be the $g$-symmetric automorphism that relates $g$ and $g_t$, i.e., such that 
\begin{equation}\label{eq:ctdef}
g(C_t(X),Y)=g_t(X,Y), \quad X,Y\in T_pM,
\end{equation}
we then get
\begin{equation}\label{eq:ct}
C_t(X)=P_0^{-1}P_t(X^\v)+X^\h, \quad X\in T_pM,
\end{equation}
where $X^\v$ and $X^\h$ are the vertical and horizontal components of $X$ respectively. This reveals how the geometry of $g_t$ changes with $t$, since if $P_0$ has eigenvalues $\lambda_i$, then $C_t$ has eigenvalues $\frac{1}{1+t\lambda_i}$ corresponding to the vertical directions and eigenvalues $1$ in the horizontal directions. In other words, as $t$ grows, the metric $g_t$ shrinks in the direction of the orbits and remains the same in the orthogonal directions.

\subsection{Curvature evolution}
Let us now analyze how the curvature changes under this deformation. Henceforth, we assume the initial metric $g_0$ has $\sec_{g_0}\geq0$. As explained above, this implies $\sec_{g_t}\geq0$ for all $t\geq0$. Given $X\in T_pM$, denote by $X_{\mathfrak m}$ the unique vector in $\mathfrak m_p$ such that $(X_{\mathfrak m})^*_p=X_p^\v$. Also, given a plane $\sigma=\spa\{X,Y\}$, we write
\begin{equation*}
C_t^{-1}(\sigma):=\spa\{C_t^{-1}X,C_t^{-1}Y\}.
\end{equation*}
As explained by Ziller \cite{zillermueter}, the crucial observation of M\"uter is that, to analyze the evolution of $\sec_{g_t}$, it is much more convenient to study $\sec_{g_t}(C_t^{-1}(\sigma))$ rather than $\sec_{g_t}(\sigma)$. In more recent literature, the $1$-parameter family of bundle automorphisms induced by $C_t^{-1}$ in the Grassmannian bundle $\gr$ of planes on $M$ is being called \emph{Cheeger reparametrization}, see \cite{pw,pwsphere}. The following result of M\"uter~\cite[Satz 3.10]{mueter} (see also \cite[Cor 1.4]{zillermueter}) summarizes how the curvature of $g_t$ evolves.

\begin{proposition}\label{prop:mueter}
Let $g_t$ be the Cheeger deformation of $g_0$, $\sec_{g_0}\geq0$. Given $\sigma=\spa\{X,Y\}\subset T_pM$, consider the unnormalized $g_t$-sectional curvature of $C_t^{-1}(\sigma)$:
\begin{equation*}
k_c(t):=\big\|C_t^{-1}X\wedge C_t^{-1}Y\big\|^2_{g_t}\sec_{g_t}(C_t^{-1}(\sigma))=g_t\big(R_t(C_t^{-1}X,C_t^{-1}Y)C_t^{-1}Y,C_t^{-1}X\big),
\end{equation*}
where $R_t$ is the curvature tensor of $g_t$. If $\sec_{g_0}(\sigma)=0$, then:
\begin{itemize}
\item[(i)] $k'_c(0)\geq0$;
\item[(ii)] If $k'_c(0)=0$ and $[P_0 X_{\mathfrak m},P_0Y_{\mathfrak m}]\neq0$, then $k_c''(0)=0$, $k_c'''(0)>0$ and $k_c(t)>0$ for all $t>0$;
\item[(iii)] If $k'_c(0)=0$ and $[P_0 X_{\mathfrak m},P_0 Y_{\mathfrak m}]=0$, then $k_c(t)=0$ for all $t>0$.
\end{itemize}
In particular, if $[P_0 X_{\mathfrak m}, P_0 Y_{\mathfrak m}]\neq0$, i.e., the plane $\spa\{P_0X_{\mathfrak m},P_0Y_{\mathfrak m}\}$ has positive curvature in $(\mathsf G,Q)$, and $\sec_{g_0}(\sigma)=0$, then $\sec_{g_t}(C_t^{-1}(\sigma))>0$ for all $t>0$.
\end{proposition}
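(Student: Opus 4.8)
The plan is to realize the Cheeger deformation as a Riemannian submersion and exploit the reparametrization $C_t^{-1}$ to bring the Gray--O'Neill formula into a form in which all error terms are manifestly nonnegative. Endow $M\times\mathsf G$ with the product metric $g_0+\tfrac1tQ$, so that $g_t$ is the metric making $\rho\colon M\times\mathsf G\to M$, $\rho(p,h)=h^{-1}p$, a Riemannian submersion. From \eqref{eq:p} and \eqref{eq:pt} a short computation gives that the $\rho$-horizontal lift of a vector $W\in T_mM$ equals, at $(m,e)$,
\[
\big(W-t(P_tW_{\mathfrak m})^*_m,\,-tP_tW_{\mathfrak m}\big)\in T_mM\oplus\mathfrak g .
\]
The reparametrization is tailored to this formula: since $C_t^{-1}X=X+t(P_0X_{\mathfrak m})^*_p$, its vertical part is $\big((\id+tP_0)X_{\mathfrak m}\big)^*_p$, so $(C_t^{-1}X)_{\mathfrak m}=(\id+tP_0)X_{\mathfrak m}$ and $P_t\big((C_t^{-1}X)_{\mathfrak m}\big)=P_0X_{\mathfrak m}$; feeding this into the lift formula, the $\rho$-horizontal lift of $C_t^{-1}X$ collapses to the simple vector $\widetilde X:=(X,\,-tP_0X_{\mathfrak m})\in T_pM\oplus\mathfrak g$, whose $T_pM$-component does not depend on $t$.

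I would then apply the Gray--O'Neill curvature formula to $\rho$ for the horizontal plane spanned by $\widetilde X,\widetilde Y$. By the definition of $k_c$ and since $\rho$ is a Riemannian submersion,
\[
k_c(t)=\big(g_0+\tfrac1tQ\big)\big(\bar R(\widetilde X,\widetilde Y)\widetilde Y,\widetilde X\big)+3\,\big\|A_{\widetilde X}\widetilde Y\big\|^2_{g_0+\frac1tQ},
\]
where $\bar R$ and $A$ are the curvature tensor and the O'Neill integrability tensor of $g_0+\tfrac1tQ$. Since $g_0+\tfrac1tQ$ is a product metric, $\bar R$ splits, and using the bi-invariant curvature identity $R^Q(U,V,V,U)=\tfrac14\|[U,V]\|^2_Q$ this gives
\[
k_c(t)=R_0(X,Y,Y,X)+\tfrac{t^3}{4}\big\|[P_0X_{\mathfrak m},P_0Y_{\mathfrak m}]\big\|^2_Q+3\,\big\|A_{\widetilde X}\widetilde Y\big\|^2_{g_0+\frac1tQ}.
\]
If $\sec_{g_0}(\sigma)=0$ the first term vanishes, so the right-hand side is $\ge0$ for all $t\ge0$ while $k_c(0)=0$; hence $t=0$ minimizes the smooth function $k_c$ on $[0,\infty)$ and $k_c'(0)\ge0$, which is (i). Moreover, if $[P_0X_{\mathfrak m},P_0Y_{\mathfrak m}]\ne0$ then $k_c(t)\ge\tfrac{t^3}{4}\|[P_0X_{\mathfrak m},P_0Y_{\mathfrak m}]\|^2_Q>0$ for every $t>0$, and since the $g_t$-norm of $C_t^{-1}X\wedge C_t^{-1}Y$ is positive this already yields the last assertion of the proposition.

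The remaining content of (ii) and (iii) reduces to the behavior of $\big\|A_{\widetilde X}\widetilde Y\big\|^2_{g_0+\frac1tQ}$ near $t=0$. Writing $A_{\widetilde X}\widetilde Y=\tfrac12[\widetilde X,\widetilde Y]^{\v}$, using that the Levi-Civita connection of the product metric is a product connection, and carrying out the $\rho$-vertical projection onto $\v_{(p,e)}=\{(Z^*_p,Z):Z\in\mathfrak g\}$ — which introduces the resolvent $(\id+tP_0)^{-1}$ — I expect (for a zero-curvature plane) an expansion of the form $3\,\big\|A_{\widetilde X}\widetilde Y\big\|^2_{g_0+\frac1tQ}=3t\,\|\mathsf B(X,Y)\|^2_Q+O(t^2)$, for a suitable $\mathsf B(X,Y)\in\mathfrak m_p$ built from the integrability tensor of the orbit foliation of $(M,g_0)$; in particular $k_c'(0)=3\|\mathsf B(X,Y)\|^2_Q$. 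If $k_c'(0)=0$ then $\mathsf B(X,Y)=0$ (as $P_0$ is invertible), and substituting this back into the A-tensor computation should show $A_{\widetilde X}\widetilde Y=O(t^2)$, hence $\big\|A_{\widetilde X}\widetilde Y\big\|^2_{g_0+\frac1tQ}=O(t^3)$, and moreover $A_{\widetilde X}\widetilde Y\equiv0$ once in addition $[P_0X_{\mathfrak m},P_0Y_{\mathfrak m}]=0$. Plugging these into the displayed formula: when $[P_0X_{\mathfrak m},P_0Y_{\mathfrak m}]\ne0$, $k_c$ vanishes to order $\ge3$ at $t=0$ with $k_c'''(0)\ge\tfrac32\|[P_0X_{\mathfrak m},P_0Y_{\mathfrak m}]\|^2_Q>0$ and $k_c(t)\ge\tfrac{t^3}{4}\|[P_0X_{\mathfrak m},P_0Y_{\mathfrak m}]\|^2_Q>0$ for all $t>0$, which is (ii); and when $[P_0X_{\mathfrak m},P_0Y_{\mathfrak m}]=0$ the formula collapses to $k_c(t)=3\,\big\|A_{\widetilde X}\widetilde Y\big\|^2_{g_0+\frac1tQ}\equiv0$, which is (iii).

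The step I expect to be the main obstacle is the A-tensor analysis in the last paragraph, and especially the identical vanishing needed for (iii). The delicate point is that the bracket $[\widetilde X,\widetilde Y]$ a priori depends on extensions of $X,Y$ to vector fields, which need not be $\mathsf G$-invariant; one must use the tensoriality of $A$ together with the equivariance of $\rho$-horizontal lifts to reduce everything to data at $(p,e)$, and then keep careful track of how the $t$-dependent vertical projection interacts with the orders of $t$ occurring in the bracket and in the $\mathfrak g$-component $-tP_0X_{\mathfrak m}$ of $\widetilde X$.
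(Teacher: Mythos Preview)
The paper does not actually prove this proposition: it is quoted from M\"uter's thesis \cite[Satz~3.10]{mueter}, with a pointer to Ziller's notes \cite[Cor.~1.4]{zillermueter}. Your outline is precisely the M\"uter--Ziller argument. The horizontal-lift computation and the resulting identity
\[
k_c(t)=R_0(X,Y,Y,X)+\tfrac{t^3}{4}\big\|[P_0X_{\mathfrak m},P_0Y_{\mathfrak m}]\big\|^2_Q+3\big\|A_{\widetilde X}\widetilde Y\big\|^2_{g_0+\frac1tQ}
\]
are exactly the content of \cite[Prop.~1.3]{zillermueter}, and your deduction of (i) and of the final ``in particular'' from this formula is complete and correct. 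For (ii) and (iii) you correctly locate the remaining work in the $A$-tensor term; in the cited sources this is handled not by a Taylor expansion of $\|A_{\widetilde X}\widetilde Y\|^2$ but by computing the vertical projection explicitly, which yields $3\|A_{\widetilde X}\widetilde Y\|^2_{g_0+\frac1tQ}=3t\,Q\big((\id+tP_0)^{-1}Z,Z\big)$ for a vector $Z\in\mathfrak m_p$ depending only on $X,Y,p$ (built from the integrability tensor of the orbit foliation and the bracket $[P_0X_{\mathfrak m},P_0Y_{\mathfrak m}]$), and \emph{not} on $t$. From this closed form one reads off $k_c'(0)=3Q(Z,Z)$, so $k_c'(0)=0$ forces $Z=0$, whence the $A$-term vanishes identically for all $t$; (ii) and (iii) then follow at once from the remaining $\tfrac{t^3}{4}$ term. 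Your sketch is heading in the right direction, but the shortcut is to see that the $A$-term has this $t$-independent $Z$ rather than to chase orders of vanishing.
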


Observe that, from (iii), if $\sigma$ is tangent to a totally geodesic flat torus in $M$ that contains a horizontal direction, then $\sec_{g_t}(C_t^{-1}(\sigma))=0$, $t\geq0$, i.e., $\sigma$ remains flat. On the other hand, we also get the following important positive result:

\begin{corollary}\label{cor:so3}
Assume $\mathsf G=\SO(3)$ or $\mathsf G=\SU(2)$, so that $\sec_Q>0$. If $\sec_{g_0}(\sigma)=0$ and the image of the projection of $\sigma\subset\v_p\oplus\h_p$ onto $\v_p$ is $2$-dimensional, then $\sec_{g_t}(C_t^{-1}(\sigma))>0$ for all $t>0$. In other words, up to the Cheeger reparametrization, zero curvature planes with nondegenerate vertical projection have positive curvature with $g_t$, for all $t>0$.
\end{corollary}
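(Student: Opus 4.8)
The plan is to obtain this at once from the last assertion of Proposition~\ref{prop:mueter}: once we know $\sec_{g_0}(\sigma)=0$ and $[P_0X_{\mathfrak m},P_0Y_{\mathfrak m}]\neq0$, that proposition already gives $\sec_{g_t}(C_t^{-1}(\sigma))>0$ for all $t>0$. Since $\sec_{g_0}(\sigma)=0$ is assumed, the entire content of the corollary is the implication: if the orthogonal projection of $\sigma$ onto $\v_p$ is $2$-dimensional, then the Lie bracket $[P_0X_{\mathfrak m},P_0Y_{\mathfrak m}]$ is nonzero. So that is what I would prove.

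First I would unwind the hypothesis. Writing $\sigma=\spa\{X,Y\}$, the orthogonal projection $T_pM=\v_p\oplus\h_p\to\v_p$ sends $\sigma$ to $\spa\{X^\v_p,Y^\v_p\}$, which is $2$-dimensional exactly when $X^\v_p$ and $Y^\v_p$ are linearly independent. The map $Z\mapsto Z^*_p$ is a linear isomorphism $\mathfrak m_p\to\v_p=T_p\mathsf G(p)$ taking $X_{\mathfrak m}$ to $X^\v_p$ and $Y_{\mathfrak m}$ to $Y^\v_p$, so the displayed condition holds if and only if $X_{\mathfrak m}$ and $Y_{\mathfrak m}$ are linearly independent in $\mathfrak m_p$. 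Because $P_0\colon\mathfrak m_p\to\mathfrak m_p$ is a $Q$-symmetric automorphism, and in particular invertible, $P_0X_{\mathfrak m}$ and $P_0Y_{\mathfrak m}$ are then also linearly independent, hence span a genuine $2$-plane $\spa\{P_0X_{\mathfrak m},P_0Y_{\mathfrak m}\}\subseteq\mathfrak m_p\subseteq\mathfrak g$.

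Finally I would use the hypothesis $\sec_Q>0$, which holds because $\mathsf G$ is $\SO(3)$ or $\SU(2)$: every $2$-plane in $(\mathsf G,Q)$ has positive sectional curvature, and the bi-invariant curvature formula $\sec_Q(\spa\{A,B\})=\tfrac14\,\|[A,B]\|_Q^2\big/\big(\|A\|_Q^2\|B\|_Q^2-Q(A,B)^2\big)$ shows this is equivalent to $[A,B]\neq0$ for all linearly independent $A,B\in\mathfrak g$. Taking $A=P_0X_{\mathfrak m}$ and $B=P_0Y_{\mathfrak m}$ gives $[P_0X_{\mathfrak m},P_0Y_{\mathfrak m}]\neq0$, and combining this with $\sec_{g_0}(\sigma)=0$ in Proposition~\ref{prop:mueter} yields $\sec_{g_t}(C_t^{-1}(\sigma))>0$ for all $t>0$. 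I do not expect any genuine obstacle here: the corollary is pure bookkeeping on top of Proposition~\ref{prop:mueter}, the only point needing a moment's attention being the passage from the geometric statement ``$\spa\{P_0X_{\mathfrak m},P_0Y_{\mathfrak m}\}$ has positive $Q$-curvature'' to the algebraic one ``$[P_0X_{\mathfrak m},P_0Y_{\mathfrak m}]\neq0$'', for which the curvature formula above is all that is required.
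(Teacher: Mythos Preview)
Your proof is correct and matches the paper's approach exactly: the paper states this as an immediate corollary of the last sentence of Proposition~\ref{prop:mueter} without further argument, and you have simply spelled out the routine verification that a $2$-dimensional vertical projection forces $[P_0X_{\mathfrak m},P_0Y_{\mathfrak m}]\neq0$ when $\sec_Q>0$. There is nothing to add.
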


\subsection{The case of $S^2\times S^2$}
Consider $S^2\times S^2$ endowed with the standard product metric $g_0$ and the diagonal $\SO(3)$-action:
\begin{equation*}
A\cdot(p_1,p_2)=(A\,p_1,A\,p_2), \quad p=(p_1,p_2)\in S^2\times S^2\subset\R^3\oplus\R^3, A\in\SO(3).
\end{equation*}
This is a cohomogeneity one isometric action with orbit space a closed interval, so there are codimension one principal orbits (corresponding to interior points of the interval) and two singular orbits (corresponding to the endpoints), see \cite{zillersurvey}. These singular orbits are the \emph{diagonal} and \emph{anti-diagonal} submanifolds:
\begin{equation*}
\pm\Delta S^2:=\{(p_1,\pm p_1):p_1\in S^2\}\subset S^2\times S^2.
\end{equation*}
The principal isotropy $\mathsf G_p$, $p\not\in\pm\Delta S^2$, is trivial, since it consists of orientation-preserving isometries of $\R^3$ that fix two linearly independent directions. The singular isotropies are formed by orientation-preserving isometries of $\R^3$ that fix one direction, hence are isomorphic to $\SO(2)$. Thus, the group diagram of this action is $\{1\}\subset\{\SO(2),\SO(2)\}\subset\SO(3)$.

Following M\"uter~\cite{mueter}, we identify the Lie algebra of $\SO(3)$ with $\R^3$ by:
\begin{equation*}
\mathfrak{so}(3)\ni Z=\left(\begin{array}{ccc}
0 & -z_3 & z_2 \\
z_3 & 0 & -z_1 \\
-z_2 & z_1 & 0
\end{array}\right)\longleftrightarrow\left(\begin{array}{c}
z_1 \\ z_2 \\ z_3\end{array}\right)=z\in\R^3.
\end{equation*}
Considering $(\mathfrak{so}(3),Q)$ endowed with the standard bi-invariant metric, the above is an isometric identification with Euclidean space $(\R^3,\langle\cdot,\cdot\rangle)$. In this way, since the Lie exponential in $\SO(3)$ is given by matrix exponentiation, the action field induced by $Z\in\mathfrak{so}(3)$ is:
\begin{equation}\label{eq:actionfield}
Z^*_p=(Z^*_{p_1},Z^*_{p_2}) =(Z\,p_1,Z\,p_2)=(z\wedge p_1,z\wedge p_2)\in T_p(S^2\times S^2).
\end{equation}
So, if $x,y\in\R^3$ are such that $\langle x,p_1\rangle=\langle y,p_2\rangle=0$, then for all $z\in\R^3$,
\begin{equation}\label{eq:coiso}
g_0\big((X^*_{p_1},Y^*_{p_2}),Z^*_p\big)=\langle x\wedge p_1,z\wedge p_1\rangle +\langle y\wedge p_2,z\wedge p_2\rangle =\langle x+y,z\rangle.
\end{equation}
Thus, the vector $(X^*_{p_1},-X^*_{p_2})\in T_p(S^2\times S^2)$ is horizontal, whenever $x\in\{p_1,p_2\}^\perp:=\{x\in\R^3:\langle x,p_1\rangle=\langle x,p_2\rangle=0\}$. By dimensional reasons, it then follows that the horizontal space for the $\SO(3)$-action on $S^2\times S^2$ at $p=(p_1,p_2)$ is
\begin{equation*}
\h_p=\big\{(X^*_{p_1},-X^*_{p_2})\in T_p(S^2\times S^2):x\in\{p_1,p_2\}^\perp\big\}.
\end{equation*}
Recall that the vertical space is $\v_p=\{(X^*_{p_1}, X^*_{p_2}):X\in\mathfrak m_p\}$, see \eqref{eq:actionfield}. For general $x,y\in\R^3$, analogously to \eqref{eq:coiso}, we have:
\begin{multline*}
g_0\big(X^*_{p},Y^*_{p}\big)=g_0\big((X^*_{p_1},X^*_{p_2}),(Y^*_{p_1},Y^*_{p_2})\big)=\langle x\wedge p_1,y\wedge p_1\rangle +\langle x\wedge p_2,y\wedge p_2\rangle =\\ 
\langle p_1\wedge(x\wedge p_1),y\rangle+\langle p_2\wedge(x\wedge p_2),y\rangle =\big\langle (2x-\langle x,p_1\rangle p_1-\langle x,p_2\rangle p_2),y\big\rangle.
\end{multline*}
From \eqref{eq:p}, the above is equal to $\langle P_0\, X,Y\rangle$, so we get an explicit formula for $P_0\colon\mathfrak m_p\to\mathfrak m_p$ in our example:
\begin{equation*}
P_0\, X=2X-\langle X,p_1\rangle p_1-\langle X,p_2\rangle p_2.
\end{equation*}
In particular, it follows that the subspace $\{p_1,p_2\}^\perp\subset \mathfrak m_p$ is invariant under $P_0$ and hence under $P_t$ and $C_t$, see \eqref{eq:pt} and \eqref{eq:ct}.

Let $\pi\colon\{p_1,p_2\}^\perp\to\{p_1,p_2\}^\perp/\sim$ be the projection onto the corresponding real projective space. For each $x\in\{p_1,p_2\}^\perp$, consider the vertizontal\footnote{i.e., this plane is spanned by one vertical and one horizontal vector.} plane
\begin{equation}\label{eq:sigmapi}
\sigma_{\pi(x)}:=\spa\{(X^*_{p_1},0),(0,X^*_{p_2})\}=\spa\{(X^*_{p_1},X^*_{p_2}),(X^*_{p_1},-X^*_{p_2})\}.
\end{equation}
This is the unique mixed plane at $p$ that contains the horizontal vector $(X^*_{p_1},-X^*_{p_2})$. Thus, from Corollary~\ref{cor:so3}, $\{\sigma_{\pi(x)}\subset T_p(S^2\times S^2):x\in\{p_1,p_2\}^\perp\}$ are the only $g_0$-flat planes at $p$ that remain $g_t$-flat for $t>0$, up to the Cheeger reparametrization. As a matter of fact, by the above, the Cheeger reparametrization fixes such planes, i.e., 
\begin{equation}\label{eq:ctfix}
C_t^{-1}(\sigma_{\pi(x)})=\sigma_{\pi(x)}.
\end{equation} 

In conclusion, for any $t>0$, $\sec_{g_t}\geq0$ and $\sec_{g_t}(\sigma)=0$ if and only if $\sigma=\sigma_{\pi(x)}\subset T_p(S^2\times S^2)$ for some $x\in\{p_1,p_2\}^\perp$. In particular, $g_t$-flat planes at $p$ are parametrized by $\pi(\{p_1,p_2\}^\perp)$. Thus, in $(S^2\times S^2,g_t)$, $t>0$, there is a circle's worth of zero curvature planes at each $p\in\pm\Delta S^2$, a unique zero curvature plane at each $p\not\in\pm\Delta S^2$, and all other planes have positive curvature. In terms of $\sec^\theta$, we thus have the following.

\begin{proposition}\label{prop:geomcheeger}
Let $g_t$ be the Cheeger deformation of $(S^2\times S^2,g_0)$ with respect to the diagonal $\SO(3)$-action. 
Then, for any $\theta>0$ and $t>0$, $\sec^\theta_{g_t}\geq0$ and equality only holds for planes at some $p=(p_1,\pm p_1)\in\pm\Delta S^2$. If $\sigma\subset T_{p}(S^2\times S^2)$ has $\sec^\theta_{g_t}(\sigma)=0$, then $\sigma=\sigma_{\pi(x)}$ for some $x\in\{p_1\}^\perp$; in particular, $\sigma$ is not tangent to the submanifold $\pm\Delta S^2$.
\end{proposition}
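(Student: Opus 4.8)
The plan is to leverage the two facts already in hand: that $\sec_{g_t}\ge 0$ for all $t\ge 0$, and that the $g_t$-flat planes ($t>0$) at a point $p=(p_1,p_2)$ are exactly the vertizontal planes $\sigma_{\pi(x)}$ with $x\in\{p_1,p_2\}^\perp$, parametrized by the projective space $\pi(\{p_1,p_2\}^\perp)$. The first of these immediately yields $\sec^\theta_{g_t}\ge 0$: for any $\sigma,\sigma'\subset T_pM$ the average $\tfrac12(\sec_{g_t}(\sigma)+\sec_{g_t}(\sigma'))$ is a sum of nonnegative terms, and taking the minimum over $\sigma'$ with $\dist(\sigma,\sigma')\ge\theta$ preserves nonnegativity.

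The core of the argument is to describe the equality locus. First I would note that $\{\sigma'\subset T_pM:\dist(\sigma,\sigma')\ge\theta\}$ is the preimage of $[\theta,\infty)$ under the continuous map $\sigma'\mapsto\dist(\sigma,\sigma')$, hence a closed, thus compact, subset of the fibre $Gr_2(T_pM)$; so whenever it is nonempty the minimum defining $\sec^\theta_{g_t}(\sigma)$ is attained at some plane $\sigma'$ (and if it is empty, $\sec^\theta_{g_t}(\sigma)=+\infty$ and there is nothing to prove). If $\sec^\theta_{g_t}(\sigma)=0$, then since both summands are $\ge 0$ we must have $\sec_{g_t}(\sigma)=\sec_{g_t}(\sigma')=0$. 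By the classification of $g_t$-flat planes recalled above, $\sigma=\sigma_{\pi(x)}$ and $\sigma'=\sigma_{\pi(x')}$ for some $x,x'\in\{p_1,p_2\}^\perp$; moreover $\sigma\ne\sigma'$ because $\dist(\sigma,\sigma')\ge\theta>0$, so $\pi(x)\ne\pi(x')$ and $x,x'$ are linearly independent. This forces $\dim\{p_1,p_2\}^\perp\ge 2$, equivalently $p_1$ and $p_2$ are linearly dependent, i.e., $p=(p_1,\pm p_1)\in\pm\Delta S^2$; and then $\{p_1,p_2\}^\perp=\{p_1\}^\perp$, so $\sigma=\sigma_{\pi(x)}$ with $x\in\{p_1\}^\perp$, as claimed.

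To finish, I would check that such a $\sigma_{\pi(x)}$ is not tangent to $\pm\Delta S^2$, i.e., is not a subspace of $T_p(\pm\Delta S^2)$. For $p=(p_1,\pm p_1)$ one has $X^*_{p_2}=\pm X^*_{p_1}=\pm(x\wedge p_1)$, so by \eqref{eq:sigmapi}, $\sigma_{\pi(x)}=\spa\{(x\wedge p_1,0),(0,x\wedge p_1)\}$, with $x\wedge p_1\ne 0$ because $0\ne x\perp p_1$. On the other hand $T_p(\pm\Delta S^2)=\{(v,\pm v):v\perp p_1\}$ consists of vectors whose two $\R^3$-components agree up to sign, which rules out the vector $(x\wedge p_1,0)\in\sigma_{\pi(x)}$. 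Hence $\sigma_{\pi(x)}\not\subset T_p(\pm\Delta S^2)$.

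The only step that is not purely formal is the passage from $\sec^\theta_{g_t}(\sigma)=0$ to the existence of \emph{two distinct} $g_t$-flat planes at the same base point, separated by distance $\ge\theta$ in the Grassmannian fibre; once this is isolated, the geometry of the Cheeger-deformed metric — a single flat plane over generic points and an $\R P^1$ of flat planes over each singular orbit — does the rest. I do not anticipate a serious obstacle here; the main care needed is in bookkeeping the parametrization $\pi(\{p_1,p_2\}^\perp)$ and the jump in its dimension along $\pm\Delta S^2$, together with the (standard) compactness remark that makes the minimum in the definition of $\sec^\theta$ an actual minimum.
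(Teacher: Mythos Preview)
Your argument is correct and follows essentially the same approach as the paper, which simply cites the preceding discussion (and M\"uter~\cite[Satz 4.26]{mueter}) for the classification of $g_t$-flat planes and then declares the proposition immediate. You have spelled out explicitly the two points the paper leaves implicit: that $\sec^\theta_{g_t}(\sigma)=0$ forces two \emph{distinct} flat planes at $p$, hence $\dim\{p_1,p_2\}^\perp\ge 2$ and $p\in\pm\Delta S^2$; and the direct check that $(x\wedge p_1,0)\in\sigma_{\pi(x)}\setminus T_p(\pm\Delta S^2)$.
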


\begin{proof}
The above statements follow immediately from M\"uter~\cite[Satz 4.26]{mueter} (see also Ziller~\cite[p. 5]{zillermueter} and Kerin~\cite[Rem 4.3]{kerin}), as well as from the above discussion.
\end{proof}

\begin{remark}
For $n\geq3$, although there exists an analogous cohomogeneity one $\SO(n+1)$-action on $S^n\times S^n$, the corresponding Cheeger deformation fails to produce so many positively curved planes. This is due to the fact that $\SO(n+1)$, $n\geq 3$, is not positively curved, cf. Corollary~\ref{cor:so3}. As a result, this step in the construction of our metrics with $\sec^\theta>0$ only works on $S^n\times S^n$ if $n=2$.
\end{remark}

\section{Second step: first-order local conformal deformation}\label{sec:firstorder}

As seen above, the Cheeger deformed metrics $g_t$, $t>0$, have $\sec^\theta_{g_t}\geq0$ and equality holds only for certain planes (of the form \eqref{eq:sigmapi}) at $\pm\Delta S^2$. In order to get these planes to also have $\sec^\theta>0$, we now carry out a (local) first-order conformal deformation, inspired by results of Strake~\cite{strake}. More precisely, choose $g$ to be a Cheeger deformed metric $g_t$ for any $t>0$ and consider the new $1$-parameter family
\begin{equation}\label{eq:gs}
g_s:=g+s\,h, \quad s\in\,]-\varepsilon,\varepsilon[,
\end{equation}
where $h$ is some symmetric $(0,2)$-tensor to be defined, and $\varepsilon>0$ is small enough so that $g_s$ is still a Riemannian metric. Given the above geometry of the Cheeger deformed metric $g$, we will choose $h$ such that 
\begin{multline}\label{eq:goal}
\frac{\dd}{\dd s}\big(\sec_{g_s}(\sigma_1)+\sec_{g_s}(\sigma_2)\big)\Big|_{s=0}>0, \quad \mbox{ for all } \sigma_1,\sigma_2\subset T_p(S^2\times S^2) \mbox{ with }\\ p\in\pm\Delta S^2 \mbox{ and }\sec_g(\sigma_1)=\sec_g(\sigma_2)=0.
\end{multline}
The crucial observation that makes this possible is that these planes are never tangent to $\pm\Delta S^2$. Our choice will be such that $h$ is supported only near $\pm\Delta S^2$ and is pointwise proportional to $g$, justifying the terminology.
We start by recalling the first variation of $\sec_{g_s}(\sigma)$, see Strake~\cite[Sec 3.a]{strake}.

\begin{proposition}\label{prop:secondstep}
Let $(M,g)$ be a Riemannian manifold with $\sec_g\geq0$ and $X,Y\in T_pM$ be $g$-orthonormal vectors that span a $g$-flat plane $\sigma\subset T_pM$. Consider a first-order variation $g_s=g+s\,h$. Then the first variation of $\sec_{g_s}(\sigma)$ is given by
\begin{equation*}
\frac{\dd}{\dd s}\sec_{g_s}(\sigma)\Big|_{s=0}=\nabla_X\nabla_Y h(X,Y)-\tfrac12\nabla_X\nabla_X h(Y,Y)-\tfrac12\nabla_Y\nabla_Y h(X,X).
\end{equation*}
In particular, if $h=\phi\,g$, then
\begin{equation}\label{eq:dds}
\frac{\dd}{\dd s}\sec_{g_s}(\sigma)\Big|_{s=0} =-\tfrac12 \hess\phi\,(X,X)-\tfrac12\hess\phi\,(Y,Y).
\end{equation}
\end{proposition}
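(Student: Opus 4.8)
The plan is to differentiate the Rayleigh quotient defining $\sec_{g_s}(\sigma)$ and to exploit the hypothesis $\sec_g\geq0$ to eliminate the terms that would otherwise involve the curvature of $g$ itself. Write $\sigma=\spa\{X,Y\}$ and recall that for any metric $g_s$,
\begin{equation*}
\sec_{g_s}(\sigma)=\frac{R_s(X,Y,Y,X)}{g_s(X,X)\,g_s(Y,Y)-g_s(X,Y)^2},
\end{equation*}
where $R_s$ is the $(0,4)$-curvature tensor of $g_s$ and $R_s(X,Y,Y,X)=g_s(R_s(X,Y)Y,X)$. At $s=0$ the denominator equals $1$, since $X,Y$ are $g$-orthonormal, while the numerator vanishes, since $\sigma$ is $g$-flat; hence the quotient rule gives $\frac{\dd}{\dd s}\sec_{g_s}(\sigma)\big|_{s=0}=\frac{\dd}{\dd s}R_s(X,Y,Y,X)\big|_{s=0}$, so only the numerator contributes to the first variation.

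Next I would invoke the classical first-variation formulas for the connection and the curvature tensor. Let $\dot\Gamma$ be the variation of the Levi-Civita connection, the symmetric $(1,2)$-tensor characterized by $g(\dot\Gamma(A,B),C)=\tfrac12\big((\nabla_A h)(B,C)+(\nabla_B h)(A,C)-(\nabla_C h)(A,B)\big)$; then the variation of the $(1,3)$-curvature is $\dot R(A,B)C=(\nabla_A\dot\Gamma)(B,C)-(\nabla_B\dot\Gamma)(A,C)$. Differentiating $g_s(R_s(X,Y)Y,X)$ at $s=0$ yields
\begin{equation*}
\frac{\dd}{\dd s}R_s(X,Y,Y,X)\Big|_{s=0}=h\big(R(X,Y)Y,X\big)+g\big(\dot R(X,Y)Y,X\big).
\end{equation*}
Here is the one point where $\sec_g\geq0$ is needed: the smooth function $t\mapsto R(X+tW,Y,Y,X+tW)$ is nonnegative and, by hypothesis, vanishes at $t=0$, so its derivative there, which equals $2\,g(R(X,Y)Y,W)$, must vanish for every $W\in T_pM$; thus $R(X,Y)Y=0$ and the term $h(R(X,Y)Y,X)$ drops out.

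It then remains to expand $g(\dot R(X,Y)Y,X)$. Writing $T(A,B,C):=g(\dot\Gamma(A,B),C)$ and using that $g$ is parallel, one has $g(\dot R(X,Y)Y,X)=(\nabla_X T)(Y,Y,X)-(\nabla_Y T)(X,Y,X)$; and, by the symmetry of $h$,
\begin{equation*}
T(Y,Y,X)=(\nabla_Y h)(Y,X)-\tfrac12(\nabla_X h)(Y,Y),\qquad T(X,Y,X)=\tfrac12(\nabla_Y h)(X,X).
\end{equation*}
Evaluating the remaining covariant derivatives at $p$ after extending $X,Y$ to vector fields with $\nabla X|_p=\nabla Y|_p=0$ (for instance via $g$-normal coordinates at $p$), the $(\nabla_X T)$-term contributes $\nabla_X\nabla_Y h(X,Y)-\tfrac12\nabla_X\nabla_X h(Y,Y)$ and $-(\nabla_Y T)(X,Y,X)$ contributes $-\tfrac12\nabla_Y\nabla_Y h(X,X)$, which together give the stated formula; since the left-hand side is tensorial, it does not depend on the chosen extension. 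Finally, for $h=\phi\,g$ one uses $\nabla g=0$, so that $\nabla^2(\phi\,g)=\hess\phi\otimes g$; then $\nabla_X\nabla_Y h(X,Y)=\hess\phi(X,Y)\,g(X,Y)=0$ while $\nabla_X\nabla_X h(Y,Y)=\hess\phi(X,X)$ and $\nabla_Y\nabla_Y h(X,X)=\hess\phi(Y,Y)$, which is \eqref{eq:dds}. I expect the only genuine difficulty to be notational bookkeeping: fixing curvature and connection-variation sign conventions consistently so that the term killed by $\sec_g\geq0$ is correctly identified, and checking that the surviving contractions collapse to exactly the three Hessian-type terms.
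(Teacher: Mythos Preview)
Your argument is correct. The paper itself does not prove this proposition; it simply states the formula and refers to Strake~\cite[Sec~3.a]{strake} for the derivation, so there is no proof in the paper to compare against. Your computation is the standard one, and your use of the hypothesis $\sec_g\geq0$ to conclude $R(X,Y)Y=0$ (rather than merely $g(R(X,Y)Y,X)=0$) is exactly the point that justifies dropping the $h(R(X,Y)Y,X)$ term; without it the general first-variation formula would retain a curvature contribution.
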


Now, observe that if $N\subset M$ is an embedded submanifold, the squared distance function $\psi(p)=\dist_g(p,N)^2$ is smooth in a sufficiently small tubular neighborhood of $N$. The gradient of $\psi$ at $p$ vanishes if $p\in N$, and points in the outward radial direction if $p\not\in N$. The Hessian of $\psi$ at $p\in N$ is given by:
\begin{equation}\label{eq:hess}
\hess\psi\,(X,X)=2g(X_\perp,X_\perp)=2\|X_\perp\|^2_g, \quad X\in T_pM,
\end{equation}
where $X=X_\top+X_\perp\in T_pN\oplus (T_pN)^\perp$ is the $g$-orthogonal decomposition in tangent and normal parts to $N$.

\begin{proposition}\label{prop:final}
Consider the metrics $g_s$ on $S^2\times S^2$, given by \eqref{eq:gs}. There exists a smooth function $\phi\colon S^2\times S^2\to\R$, supported in a neighborhood of $\pm\Delta S^2$, such that if $h=\phi\,g$, then \eqref{eq:goal} holds.
\end{proposition}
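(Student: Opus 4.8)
The plan is to take $\phi$ to be, near each of $\pm\Delta S^2$, a cutoff multiple of \emph{minus} the squared $g$-distance to that submanifold, and then read off \eqref{eq:goal} from the first-variation formula \eqref{eq:dds}, the Hessian formula \eqref{eq:hess}, and the transversality statement in Proposition~\ref{prop:geomcheeger}.

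First I would construct $\phi$. The submanifolds $\Delta S^2$ and $-\Delta S^2$ are disjoint, compact and embedded in $(S^2\times S^2,g)$, so, as recalled above, the squared $g$-distance functions $\psi_\pm:=\dist_g(\cdot,\pm\Delta S^2)^2$ are smooth on sufficiently small tubular neighborhoods of $\pm\Delta S^2$. I would choose such neighborhoods $U_\pm$ small enough to be disjoint, together with cutoff functions $\chi_\pm\in C^\infty(S^2\times S^2)$ satisfying $\chi_\pm\equiv1$ on a neighborhood of $\pm\Delta S^2$ and $\operatorname{supp}\chi_\pm\subset U_\pm$, and then set $\phi:=-(\chi_+\psi_++\chi_-\psi_-)$, extended by zero outside $U_+\cup U_-$. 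This is a smooth function supported in a neighborhood of $\pm\Delta S^2$, and on a neighborhood of $\pm\Delta S^2$ it agrees with $-\psi_\pm$.

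Next I would evaluate the left-hand side of \eqref{eq:goal}. Fix $p\in\pm\Delta S^2$ and a $g$-flat plane $\sigma\subset T_p(S^2\times S^2)$ with $g$-orthonormal basis $\{X,Y\}$; since $g=g_t$ is a Cheeger deformation of the product metric we have $\sec_g\geq0$, so Proposition~\ref{prop:secondstep} applies. As $\phi$ agrees with $-\psi_\pm$ near $p$, we have $\hess\phi=-\hess\psi_\pm$ at $p$, and \eqref{eq:hess} gives $\hess\phi(X,X)=-2\|X_\perp\|_g^2$, where $X_\perp$ denotes the $g$-normal component of $X$ to $\pm\Delta S^2$, and similarly for $Y$. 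Substituting into \eqref{eq:dds},
\[
\tfrac{\dd}{\dd s}\sec_{g_s}(\sigma)\big|_{s=0}=-\tfrac12\hess\phi(X,X)-\tfrac12\hess\phi(Y,Y)=\|X_\perp\|_g^2+\|Y_\perp\|_g^2\geq0.
\]
Since $\sec_{g_s}(\sigma)$, hence its first variation, is intrinsic to the plane $\sigma$, the last quantity is independent of the chosen orthonormal basis. By Proposition~\ref{prop:geomcheeger} the flat plane $\sigma$ equals some $\sigma_{\pi(x)}$ and is \emph{not} tangent to $\pm\Delta S^2$; in particular $\sigma\not\subset T_p(\pm\Delta S^2)$, so at least one of $X_\perp,Y_\perp$ is nonzero and the expression above is strictly positive. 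Applying this to $\sigma_1$ and $\sigma_2$ separately and adding yields \eqref{eq:goal}.

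The step needing the most attention is one of bookkeeping rather than difficulty: one must pick the sign of the conformal factor so that $\hess\phi$ is negative semidefinite along $\pm\Delta S^2$ and vanishes exactly in directions tangent to it — hence $\phi=-\dist_g(\cdot,\pm\Delta S^2)^2$ near the singular orbits, not $+\dist_g(\cdot,\pm\Delta S^2)^2$ — and one must check that the resulting $\phi$ is genuinely smooth on all of $S^2\times S^2$, which is why the cutoffs and the disjointness of $U_+$ and $U_-$ are needed. The substantive geometric input, namely that every $g$-flat plane over $\pm\Delta S^2$ is transverse to $\pm\Delta S^2$, was already established in Section~\ref{sec:cheeger}, so no genuine obstacle remains.
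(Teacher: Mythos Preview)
Your proposal is correct and follows essentially the same route as the paper: define $\phi=-(\chi_+\psi_+ + \chi_-\psi_-)$ using squared distances to $\pm\Delta S^2$ and cutoffs, then combine \eqref{eq:dds}, \eqref{eq:hess}, and the transversality of the flat planes $\sigma_{\pi(x)}$ to $\pm\Delta S^2$ from Section~\ref{sec:cheeger} to get strict positivity of each summand in \eqref{eq:goal}. The only cosmetic difference is that the paper evaluates the first variation on the explicit vertizontal basis of $\sigma_{\pi(x)}$, obtaining $\|(X^*_{p_1},-X^*_{p_1})\|_g^2>0$ directly, whereas you keep a general orthonormal basis and argue $\|X_\perp\|_g^2+\|Y_\perp\|_g^2>0$ from non-tangency; both amount to the same computation.
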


\begin{proof}
From Proposition~\ref{prop:geomcheeger}, the only planes $\sigma$ with $\sec^\theta_g(\sigma)=0$ are of the form $\sigma_{\pi(x)}\subset T_{(p_1,\pm p_1)}(S^2\times S^2)$ for some $x\in\{p_1\}^\perp$. These planes are vertizontal, i.e., they contain a direction normal to $\pm\Delta S^2$.

Let us analyze the plane $\sigma_{\pi(x)}$  at $(p_1,p_1)\in\Delta S^2$, the case of $(p_1,-p_1)\in-\Delta S^2$ being totally analogous. As mentioned above, the function $\psi_+(p)=\dist_g(p,\Delta S^2)^2$ is smooth in a tubular neighborhood $D(\Delta S^2)$. Let $\chi_+\colon S^2\times S^2\to\R$ be a smooth cutoff function that vanishes outside $D(\Delta S^2)$ and is equal to $1$ in a smaller tubular neighborhood of $\Delta S^2$. 
According to \eqref{eq:sigmapi}, \eqref{eq:dds} and \eqref{eq:hess}, if we set $h_+=-(\chi_+\psi_+)\,g$ and $g^+_s=g+s\,h_+$, then
\begin{equation*}
\frac{\dd}{\dd s}\sec_{g^+_s}(\sigma_{\pi(x)})\Big|_{s=0}=\big\|(X^*_{p_1},-X^*_{p_1})\big\|^2_g>0.
\end{equation*}
Defining $\psi_-$ and $\chi_-$ analogously, we get $h_-=-(\chi_-\psi_-)\,g$ and $g^-_s=g+s\,h_-$ with the same property as above for planes at $(p_1,-p_1)\in -\Delta S^2$ with zero $g$-curvature. Thus, the function $\phi:=-(\chi_+\psi_+)-(\chi_-\psi_-)$ has the desired properties. More precisely, $h=\phi\,g=h_+ +h_-$ is such that $g_s=g+s\,h$ coincides with $g_s^\pm$ near $\pm\Delta S^2$. Hence $\frac{\dd}{\dd s}\sec_{g_s}(\sigma)\big|_{s=0}>0$ for all planes $\sigma$ at $\pm\Delta S^2$ such that $\sec_g(\sigma)=0$; in particular, \eqref{eq:goal} holds.
\end{proof}

In order to conclude the proof of the Theorem in the Introduction, we quote the following elementary fact.

\begin{proposition}\label{prop:cpct}
Let $f\colon \,]-\varepsilon,\varepsilon[\,\times K\to\R$ be a smooth function, where $K$ is a compact manifold (possibly with boundary). Assume that $f(0,x)\geq 0$ for all $x\in K$ and $\frac{\partial f}{\partial s}(0,x)>0$ if $f(0,x)=0$. Then, there exists $s_*>0$ such that $f(s,x)>0$ for all $x\in K$ and $0<s<s_*$.
\end{proposition}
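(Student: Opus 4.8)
The plan is a standard two-region compactness argument: split $K$ into a neighborhood of the zero set $Z:=\{x\in K: f(0,x)=0\}$, where the hypothesis on $\partial_s f$ does the work, and its complement, where $f(0,\cdot)$ is bounded below by a positive constant and mere continuity suffices.

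First I would note that $Z$ is closed in $K$, hence compact. For each $x\in Z$ one has $\partial_s f(0,x)>0$; since $\partial_s f$ is continuous, the point $(0,x)$ admits a product neighborhood $\,]-a_x,a_x[\,\times W_x$, with $W_x\ni x$ open in $K$, on which $\partial_s f>0$. By compactness of $Z$, finitely many $W_{x_1},\dots,W_{x_n}$ cover $Z$; set $U:=\bigcup_i W_{x_i}$, an open neighborhood of $Z$, and $a:=\min_i a_{x_i}>0$, so that $\partial_s f>0$ on the slab $\,]-a,a[\,\times U$. Then for $x\in U$ and $0<s<a$ the fundamental theorem of calculus gives $f(s,x)=f(0,x)+\int_0^s \partial_s f(r,x)\,\dd r$, and since $f(0,x)\geq 0$ while the integrand is positive on $[0,s]$, we get $f(s,x)>0$.

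Next I would treat $K\setminus U$, which is closed, hence compact, and disjoint from $Z$; therefore $f(0,\cdot)$ attains a positive minimum $m>0$ on it. The map $s\mapsto \max_{x\in K\setminus U}\lvert f(s,x)-f(0,x)\rvert$ is continuous (a maximum over a compact set of a function depending continuously on the parameter) and vanishes at $s=0$, so there is $b>0$ with $\lvert f(s,x)-f(0,x)\rvert<m$ for all $\lvert s\rvert<b$ and $x\in K\setminus U$; hence $f(s,x)>0$ on $\,]-b,b[\,\times(K\setminus U)$. Taking $s_*:=\min(a,b)>0$ and combining the two cases $x\in U$ and $x\in K\setminus U$ yields $f(s,x)>0$ for all $x\in K$ and $0<s<s_*$.

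I do not expect a genuine obstacle here, as the statement is a soft compactness fact. The only points needing a little care are extracting a \emph{uniform} parameter bound $a$ (so that $\partial_s f>0$ holds on one slab $\,]-a,a[\,\times U$ rather than on $x$-dependent slabs) and the uniform continuity estimate producing $b$; both follow immediately from compactness of $Z$ and of $K\setminus U$ respectively.
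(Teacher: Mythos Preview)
Your argument is correct and is precisely the ``routine compactness argument'' the paper invokes; the paper's proof is a one-line reference to the Taylor expansion at $s=0$, and your use of the fundamental theorem of calculus on $U$ together with a uniform continuity bound on $K\setminus U$ is exactly the standard way to flesh that out. There is no substantive difference in approach.
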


\begin{proof}
Follows by a routine compactness argument, using the Taylor polynomial of $f(s,x)$ at $s=0$.
\end{proof}

Consider the following compact subset of the fibered product of two copies of the Grassmannian bundle $\gr$, where $M=S^2\times S^2$:
\begin{equation*}
K_\theta:=\big\{(p,\sigma_1,\sigma_2)\in M\times\gr\times\gr : \sigma_1,\sigma_2\subset T_pM, \,\dist(\sigma_1,\sigma_2)\geq\theta\big\}.
\end{equation*}
Due to \eqref{eq:goal} and $\theta>0$, the function
\begin{equation*}
f\colon \,]-\varepsilon,\varepsilon[\,\times K_\theta\longrightarrow\R, \quad f(s,\sigma_1,\sigma_2)=\tfrac12\big(\sec_{g_s}(\sigma_1)+\sec_{g_s}(\sigma_2)\big),
\end{equation*}
satisfies the hypotheses of Proposition~\ref{prop:cpct}, hence there exists $s_\theta>0$ such that $f(s,\sigma_1,\sigma_2)>0$ for all $(\sigma_1,\sigma_2)\in K_\theta$ and $0<s<s_\theta$. This is equivalent to $\sec^\theta_{g_s}>0$, for $0<s<s_\theta$; so we can finally choose $g^\theta:=g_s$, for any $0<s<s_\theta$. Moreover, it follows immediately from the above results (Propositions~\ref{prop:geomcheeger}, \ref{prop:secondstep} and \ref{prop:cpct}) that  $g^\theta$ can be constructed arbitrarily close to the standard product metric $g_0$, in any $C^k$-topology, $k\geq0$, by choosing $t>0$ (the duration of the Cheeger deformation) and $s>0$ sufficiently small. This concludes the proof of the Theorem in the Introduction.

\section{Remarks on the construction}\label{sec:remarks}

\subsection{First-order deformations and the Hopf conjecture}
The above first-order deformation $g_s$ works to get $\sec^\theta>0$ on all of $M=S^2\times S^2$ because the only points $p\in M$ that have planes $\sigma_1,\sigma_2\subset T_pM$ with $f(0,\sigma_1,\sigma_2)=0$ are contained in the submanifolds $\pm\Delta S^2$, which admit a relatively compact neighborhood and where $\frac{\partial f}{\partial s}(0,\sigma_1,\sigma_2)>0$. The same cannot be done for the sectional curvature because at \emph{every point} there is a plane $\sigma$ with $\sec_g(\sigma)=0$. The only type of first-order deformation that would give $\sec_{g_s}>0$ would be one with $\frac{\dd}{\dd s}\sec_{g_s}(\sigma)\big|_{s=0}>0$ for all $\sigma$ with $\sec(\sigma)=0$. It was proved by Strake~\cite[Prop. 4.3]{strake} that such a deformation does not exist on $(S^2\times S^2,g)$, due to the presence of totally geodesic flat tori.

\subsection{Other compact subsets}\label{subsec:othercpcts}
Notice that condition \eqref{eq:goal} does not contain any information on the compact subset $K_\theta$, which is the domain considered for the function $f(s,\sigma_1,\sigma_2)=\tfrac12\big(\sec_{g_s}(\sigma_1)+\sec_{g_s}(\sigma_2)\big)$. This means that the same argument above could be applied to obtain positivity of the average of sectional curvatures of planes that satisfy some other conditions codified in the form of a compact subset 
\begin{equation*}
K\subset \big\{(p,\sigma_1,\sigma_2)\in M\times\gr\times\gr : \sigma_1,\sigma_2\subset T_pM\big\},
\end{equation*}
where $M=S^2\times S^2$. Replacing the domain of $f$ by $K$, provided that $K$ does not intersect the diagonal (i.e., the subset $\Delta=\{(p,\sigma,\sigma):\sigma\subset T_pM\}$), we get from Proposition~\ref{prop:cpct} that $f(s,\sigma_1,\sigma_2)>0$ for $s>0$ small enough and all $(\sigma_1,\sigma_2)\in K$. We must require that $K$ be away from the diagonal, otherwise $f(0,\sigma_1,\sigma_2)$ would also have zeros on points outside the singular orbits $\pm\Delta S^2$, and there is no first-order variation that accounts for $\frac{\partial f}{\partial s}(0,\sigma_1,\sigma_2)>0$ at all such points. Notice also that for every $K$ with the required properties above, there exists $\theta>0$ such that $K\subset K_\theta$, so all other possibilities are accounted for by using the domains $K_\theta$.

\subsection{Ricci curvature}\label{subsec:ricci}
Since we know that $g^\theta$ can be constructed arbitrarily $C^k$-close (for any $k\geq0$) to the product metric $g_0$, it automatically follows that such metrics can be chosen with positive Ricci curvature. Nevertheless, existence of metrics with $\Ric>0$ can be directly deduced from the existence of metrics with $\sec^\theta>0$ for arbitrarily small $\theta>0$, that converge to a limit metric as $\theta\to0$, in the $C^k$-topology, $k\geq0$, as we shall now prove. This abstract property is hence stronger\footnote{Notice that the same is not true for $\oc>0$ on $4$-manifolds, as illustrated by $S^1\times S^3$ with the standard product metric. The crucial point is that $\sec^\theta>0$ has to be satisfied no matter how small $\theta>0$, and the metrics that do so do not diverge or degenerate.} than $\Ric>0$ for compact manifolds (and, of course, weaker than $\sec>0$), regardless of the dimension of $M$. In this way, the Theorem in the Introduction shows that a natural interpolating condition between $\Ric>0$ and $\sec>0$ is satisfied on $S^2\times S^2$.

\begin{proposition}\label{prop:ricci}
Let $M$ be a compact $n$-dimensional manifold such that for every $\theta>0$ there exists a metric $g^\theta$ with $\sec^\theta_{g^\theta}>0$. Assume that there exists a metric $g^0$ on $M$ such that $g^\theta\to g^0$ in the $C^0$-topology, as $\theta\to 0$. Then $\Ric_{g^\theta}>0$ for $\theta>0$ sufficiently small; in particular, if $g^\theta\to g^0$ also in the $C^2$-topology, then $\Ric_{g^0}\geq0$.\footnote{Moreover, in this case, one can also easily prove that $\sec_{g^0}\geq0$.}
\end{proposition}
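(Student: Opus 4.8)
The plan is to use the fact that $\sec^\theta_g>0$ for small $\theta$ forces a uniform lower bound on Ricci curvature, and then exploit the $C^0$ (resp.\ $C^2$) convergence $g^\theta\to g^0$. First I would fix a point $p\in M$ and a unit vector $v\in T_pM$, and complete $v$ to an orthonormal basis $e_1=v,e_2,\dots,e_n$. Then $\Ric_g(v,v)=\sum_{i=2}^n \sec_g(\spa\{v,e_i\})$. The key observation is that the planes $\spa\{v,e_2\},\spa\{v,e_3\},\dots$ all share the line $\R v$, so they are pairwise \emph{close} in the Grassmannian if $n$ is fixed — but that is the wrong direction. Instead, the right move is to pair each plane $\sigma_i=\spa\{v,e_i\}$ with a plane $\sigma_i'$ that is \emph{far} from it: for instance, for $n\geq 3$ one can take $\sigma_i'$ spanned by two of the $e_j$ with $j\neq 1,i$ (for $n=3$, $\sigma_i'=\spa\{e_2,e_3\}$ when $i=2$ or $3$; more generally choose $\sigma_i'$ inside $\{v\}^\perp$). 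Such $\sigma_i'$ is at a definite distance from $\sigma_i$ — bounded below by some $\theta_g>0$ depending only on the geometry of the Grassmannian of an $n$-dimensional inner product space — so $\sec^\theta_g>0$ for $\theta\leq\theta_g$ gives $\sec_g(\sigma_i)+\sec_g(\sigma_i')>0$.

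From here I would sum an appropriate collection of such inequalities to bound $\Ric_g(v,v)$ from below by a positive multiple of $\sec_g$ on a controlled family of planes, and then separately bound those from below. Concretely, the cleanest route: for $\theta\leq\theta_g$, the condition $\sec^\theta_g>0$ implies in particular that \emph{no two} planes at distance $\geq\theta$ can both be nonpositive; combined with a compactness argument (as in Proposition~\ref{prop:cpct}) over the compact set $K_\theta$, there is a uniform constant $c(\theta,g)>0$ with $\sec_g(\sigma_1)+\sec_g(\sigma_2)\geq c(\theta,g)$ whenever $\dist(\sigma_1,\sigma_2)\geq\theta$. Applying this to the pairs $(\sigma_i,\sigma_i')$ above and summing over $i$, every sectional curvature appears finitely often, so $\Ric_g(v,v)\geq \tfrac12\, c(\theta,g)\cdot(\text{combinatorial constant}) - (\text{bounded error from overcounting})$; choosing the pairing carefully (so each plane is used in at least one ``far'' pair, or by a clean averaging over all orthonormal frames) yields $\Ric_g(v,v)>0$ with a bound uniform in $(p,v)$.

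The issue is that $c(\theta,g)$ may degenerate as $\theta\to 0$, so this alone does not give the limit statement. For the first assertion this is not a problem: we apply the above to $g=g^\theta$ directly, getting $\Ric_{g^\theta}>0$ for each $\theta\leq\theta_{g^\theta}$. But we need $\theta_{g^\theta}$ — the Grassmannian constant above — not to shrink to $0$ faster than $\theta$; since $g^\theta\to g^0$ in $C^0$, the metrics (and hence the induced Grassmannian distances and the constant $\theta_g$) vary continuously and stay uniformly comparable to those of $g^0$ for small $\theta$, so $\theta_{g^\theta}\geq\theta_{g^0}/2>0$ for $\theta$ small, and we simply take $\theta<\theta_{g^0}/2$. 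This gives $\Ric_{g^\theta}>0$. For the second assertion, $\Ric_{g^0}\geq 0$, I would pass to the limit: $C^2$-convergence $g^\theta\to g^0$ implies $\Ric_{g^\theta}\to\Ric_{g^0}$ uniformly, and a pointwise limit of positive-semidefinite symmetric bilinear forms is positive-semidefinite. (The analogous footnoted claim $\sec_{g^0}\geq 0$ follows the same way, since $\sec_{g^\theta}(\sigma)\geq 0$ for \emph{every} $\sigma$ once $\theta$ is small — indeed if some $\sigma$ had $\sec_{g^\theta}(\sigma)<0$, pairing it with a far plane $\sigma'$ would contradict $\sec^\theta_{g^\theta}>0$ unless $\sec_{g^\theta}(\sigma')$ is large, but then iterating/averaging over far planes and using that there cannot be a whole neighborhood of negative planes forces a contradiction; alternatively, just note $\sec^\theta_{g^\theta}>0$ means every plane lies at distance $<\theta$ from the nonnegative region, so $\sec_{g^\theta}\geq -o(1)$ pointwise, then let $\theta\to 0$.)

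The main obstacle I anticipate is making the combinatorics of the ``far pairing'' clean enough to extract a genuinely positive lower bound on $\Ric$ rather than just $\Ric\geq -(\text{small})$, uniformly in the base point and direction — i.e., organizing the sum $\sum_i(\sec_g(\sigma_i)+\sec_g(\sigma_i'))>0$ so that the positive contributions outweigh any double-counting of a single (possibly slightly negative) plane. This is where the uniform constant $c(\theta,g)$ from compactness does the real work, and verifying that $c(\theta^\theta, g^\theta)$ stays bounded below — equivalently, that the relevant $\theta_g$ does not collapse under $C^0$-convergence — is the one point requiring a little care. Everything else (the Hessian-free formula $\Ric=\sum\sec$, continuity of curvature under $C^2$-convergence, semicontinuity of positive-semidefiniteness under limits) is standard.
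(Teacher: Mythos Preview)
Your proposal takes a wrong turn at the outset, and this is precisely what generates the combinatorial difficulties you flag at the end. You assert that the planes $\sigma_i=\spa\{v,e_i\}$, $i=2,\dots,n$, are pairwise \emph{close} in the Grassmannian because they share the line $\R v$. This is false: distinct planes are at positive distance in any metric inducing the standard topology, and by compactness of the orthonormal frame bundle the quantity
\[
\theta_g:=\min_{p\in M}\ \min_{\substack{v,w_1,w_2\in T_pM\\ g\text{-orthonormal}}}\dist\big(\spa\{v,w_1\},\spa\{v,w_2\}\big)
\]
is strictly positive and depends continuously on $g$. This is exactly the constant the paper introduces. Once $\theta\le\theta_g$, the hypothesis $\sec^\theta_g>0$ already gives $\sec_g(\sigma_i)+\sec_g(\sigma_j)>0$ for every $i\neq j$; summing over all $\binom{n-1}{2}$ pairs yields $(n-2)\,\Ric_g(v,v)>0$ directly. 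No auxiliary planes $\sigma_i'$, no uniform constant $c(\theta,g)$, and no overcounting bookkeeping are needed.

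Your alternative route through planes $\sigma_i'\subset\{v\}^\perp$ does not close as written: from $\sum_i\big(\sec_g(\sigma_i)+\sec_g(\sigma_i')\big)>0$ you only obtain $\Ric_g(v,v)>-\sum_i\sec_g(\sigma_i')$, and nothing controls the sign of the right-hand side. The uniform bound $c(\theta,g)$ constrains pairwise sums of far planes, not individual curvatures, so the ``positive contributions outweigh overcounting'' step is a genuine gap, not a technicality.

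What does survive from your sketch: the observation that $\theta_g$ varies continuously with $g$, so that $C^0$-convergence $g^\theta\to g^0$ yields a uniform lower bound $\theta_*>0$ on $\theta_{g^\theta}$ for small $\theta$; and the passage from $\Ric_{g^\theta}>0$ to $\Ric_{g^0}\ge 0$ under $C^2$-convergence. Both of these agree with the paper.
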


\begin{proof}
For any metric $g$ on $M$, define:
\begin{equation}
\theta_g:=\min_{p\in M}\left(\min_{\substack{v\in T_pM, \\ g(v,v)=1}}\left(\min_{\substack{w_1,w_2\in T_pM, \\ g(v,w_j)=0, \\ g(w_i,w_j)=\delta_{ij}}} \dist\Big(\spa\{v,w_1\},\spa\{v,w_2\}\Big)\right)\right).
\end{equation}
The above defines a positive number, that depends continuously on the metric $g$, such that if $\sec^{\theta}_g>0$ for some $0<\theta\leq\theta_g$, then $\Ric_g>0$. In fact, $\Ric_g(v)>0$ for any direction $v$, since this is a sum of $(n-1)$ sectional curvatures whose pairwise average is positive, because $\sec^\theta_g>0$.

Given the continuous family $\mathcal G:=\{g^\theta:\theta\in [0,1]\}$, let $\theta_*:=\min \{\theta_g: g\in\mathcal G\}$.
 It then follows that $\theta_*>0$ and hence for any $0<\theta\leq\theta_*$, we have $\Ric_{g^\theta}>0$.
\end{proof}

\begin{remark}
An immediate consequence of the above is that, although $S^1\times S^3$ has a metric with $\oc>0$, it cannot satisfy $\sec^\theta>0$ for all $\theta>0$ with metrics that do not diverge (otherwise, it would have a metric with $\Ric>0$).\end{remark}

\subsection{Negative sectional curvatures}\label{subsec:negatives}
Although the first step in our deformation preserves $\sec\geq0$ from the product metric, the second step does not. In fact, for all $\theta>0$ there are planes $\sigma$ in $(S^2\times S^2,g^\theta)$ with $\sec_{g^\theta}(\sigma)<0$. This follows from an obstruction to positive first-order deformations observed by Strake~\cite[Sec. 4]{strake}. Namely, all zero planes in the Cheeger deformed metric $g=g_t$ from Section~\ref{sec:cheeger} are tangent to a totally geodesic flat torus, see M\"uter~\cite[Satz 4.26]{mueter}. Pick one such torus $i\colon T^2\hookrightarrow (S^2\times S^2,g)$, that intersects $\pm\Delta S^2$. The first-order deformation $g_s=g+s\,h$ on $S^2\times S^2$ induces a first-order deformation $i^*g_s$ on $T^2$. As observed by Strake~\cite[Lemma 4.1]{strake}, since $i(T^2)$ is totally geodesic, the first variation for the sectional curvature on $T^2$ coincides with the ambient variation:
\begin{equation}\label{eq:ddsinduced}
\frac{\dd}{\dd s}\sec_{i^*g_s}(\sigma)\Big|_{s=0}=\frac{\dd}{\dd s}\sec_{g_s}(\dd i(\sigma))\Big|_{s=0}.
\end{equation}
In fact, this follows directly by differentiating the Gauss equation of $i(T^2)\subset (S^2\times S^2,g_s)$ at $s=0$. Let $i(p)$ be a point where $i(T^2)$ intersects $\pm\Delta S^2$. Then if $\sigma=T_pT^2$, $\dd i(\sigma)$ is such that $\sec^\theta_g(\dd i(\sigma))=0$, so the construction in Section~\ref{sec:firstorder} is such that \eqref{eq:ddsinduced} is positive. By the Gauss-Bonnet Theorem, $A(s)=\int_{T^2}\sec_{i^*g_s}\vol_{i^*g_s}=2\pi\chi(T^2)$ vanishes identically, so that
\begin{equation}
0=A'(0)=\int_{T^2} \left(\frac{\dd}{\dd s}\sec_{i^*g_s}\Big|_{s=0}\right) \vol_{i^*g}.
\end{equation}
Since the above integrand is positive at $i(p)\in\pm\Delta S^2$, it must also be negative somewhere. Together with \eqref{eq:ddsinduced} and the fact that $i(T^2)\subset(S^2\times S^2,g)$ is totally geodesic and flat, this means that $g_s$, $s>0$, must have some negative sectional curvature.

\subsection{Limiting case}
Since $\theta>0$ can be chosen arbitrarily small for our construction, a natural question is what happens to $g^\theta$ as $\theta\to 0$. By the above observations, the metric $g_s$ in \eqref{eq:gs} has some negative sectional curvature as soon as $s>0$. This implies that as $\theta\to 0$, the interval $0<s<s_\theta$ for which $g_s$ has $\sec^\theta>0$ shrinks until it disappears when $\theta=0$, since $s_\theta$ must also go to zero. In fact, if there was a uniform lower bound $0<s_*\leq s_\theta$ for all $\theta>0$, then the metrics $g_s$, $0<s<s_*$, would be such that the average sectional curvatures of any two distinct planes at the same point is positive, which in particular implies $\sec_{g_s}\geq0$, $0<s<s_*$, contradicting Subsection~\ref{subsec:negatives}. This is also reflected by the fact that the domain $K$ must be chosen compact in order for Proposition~\ref{prop:cpct} to hold, hence one cannot simply take $K$ to be the complement of the diagonal, see also Subsection~\ref{subsec:othercpcts}.

\subsection{Finite quotient}\label{subsec:rp2}
Our construction of metrics $g^\theta$ with $\sec^\theta>0$ on $S^2\times S^2$ can be made invariant under the antipodal action of $\Z_2\oplus\Z_2$, so that they induce metrics with $\sec^\theta>0$ on $\R P^2 \times \R P^2$. In particular, $\R P^2 \times \R P^2$ admits metrics with $\oc>0$.  Since such metrics come from a local isometric covering $(S^2\times S^2,g^\theta)\to\R P^2\times \R P^2$, they also do not have $\sec\geq0$ due to the above observations.

The first step in the construction gives rise to metrics invariant under $\Z_2\oplus\Z_2$, since it is a Cheeger deformation with respect to the $\SO(3)$-action, which commutes with the $\Z_2\oplus\Z_2$-action. As a side note, it was observed by M\"uter \cite[Satz 4.27]{mueter} that the induced metric on $\R P^2\times \R P^2$ at this stage is such that all its zero curvature planes are tangent to totally geodesic flat tori. The second and final step of our construction can also be made so that the resulting metrics are $\Z_2\oplus\Z_2$-invariant. Namely, this property is equivalent to the function $\phi\colon S^2\times S^2\to\R$ in Proposition~\ref{prop:final} being $\Z_2\oplus\Z_2$-invariant, which can be achieved by defining the cutoff functions $\chi_\pm$ in a symmetric way. 

\subsection{Biorthogonal pinching and isotropic curvature}
The biorthogonal curvature of a manifold $(M,g)$ is said to be \emph{(weakly) $\sfrac14$-pinched} if there exists a positive function $\delta$ such that $\tfrac\delta4\leq\oc_g(\sigma)\leq\delta$ for all $\sigma$. This notion can be extended to any dimensions by requiring that the average of any two mutually orthogonal planes is $\sfrac14$-pinched.
As observed by Seaman~\cite{seaman2}, this pinching condition implies that the manifold has nonnegative isotropic curvature. It was later proved by Seaman~\cite{seaman}, and independently by Micallef and Wang~\cite{mw}, that if an even dimensional compact orientable manifold $(M,g)$ with $b_2(M)\neq0$ has nonnegative isotropic curvature and positive biorthogonal curvature at one point, then $(M,g)$ is K\"ahler, $b_2(M)=1$ and $M$ is simply-connected. Consequently, our metrics of positive biorthogonal curvature on $S^2\times S^2$ cannot satisfy the biorthogonal $\sfrac14$-pinching condition, since $b_2(S^2\times S^2)=2$. Moreover, it also follows that such metrics do not have nonnegative isotropic curvature.

\subsection{Modified Yamabe invariant}
As observed by Costa~\cite{ezio}, the minimum of the biorthogonal curvature at each point is a \emph{modified scalar curvature}, with  corresponding \emph{modified Yamabe invariant} denoted by $Y^\perp_1(M)=\sup_g Y_1^\perp(M,g)$, where the supremum is taken over all metrics $g$ on $M$. It is observed that if a metric $g\in [g_0]$ is conformal to the standard product metric on $S^2\times S^2$, then $Y_1^\perp(S^2\times S^2,g)\leq0$. In particular, no metric conformal to $g_0$ can have positive biorthogonal curvature. However, as a direct consequence of the Theorem in the Introduction, we have that $Y^\perp_1(S^2\times S^2)>0$, see \cite[Thm 3 (1)]{ezio}.

\section{Other $4$-manifolds with positive biorthogonal curvature}\label{sec:4mflds}

In light of the above construction, it is natural to inquire how restrictive the positive biorthogonal curvature condition is on $4$-manifolds. As noted before, $\oc>0$ automatically implies $\operatorname{scal}>0$, however it does not necessarily guarantee $\Ric>0$ (cf. Subsection~\ref{subsec:ricci}). On the one hand, this means that $\oc>0$ imposes rather restrictive topological conditions on $4$-manifolds, e.g., vanishing of all the Seiberg-Witten invariants. On the other hand, such topological restrictions are by far not as strong as the ones implied by $\sec>0$, or even $\Ric>0$. For instance, $\oc>0$ does not guarantee finiteness of the fundamental group, as illustrated by $S^1\times S^3$ with the standard product metric.

This suggests that more subtle Synge-type obstructions should also not detect $\oc>0$. In fact, $\R P^2 \times \R P^2$ admits metrics with $\oc>0$, as discussed in Subsection~\ref{subsec:rp2}. Another relevant example in this context is the nontrivial $S^2$-bundle over $S^2$, which is diffeomorphic to the connected sum $\C P^2\#\overline{\C P}^2$, where $\overline{\C P}^2$ denotes the manifold $\C P^2$ with the opposite orientation from the one induced by its complex structure. We conclude by showing that this manifold also has $\oc>0$, using arguments similar to the $S^2\times S^2$ case. It is important to observe that it is also currently unknown whether $\C P^2\#\overline{\C P}^2$ admits a metric with $\sec>0$.

\begin{proposition}\label{prop:cp2}
There manifold $\C P^2\#\overline{\C P}^2$ admits metrics with $\oc>0$.
\end{proposition}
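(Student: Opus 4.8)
The plan is to run the same two-step deformation used for $S^2\times S^2$, now starting from a nonnegatively curved cohomogeneity one metric on $N:=\C P^2\#\overline{\C P}^2$. For the first ingredient I would use the realization of $N$ as the twisted $S^2$-bundle $S^3\times_{S^1}S^2$: endow $S^3\times S^2\subset\C^2\times(\C\oplus\R)$ with the product of round metrics, and let $S^1$ act by the Hopf action on $S^3$ and by $z\cdot(u,t)=(zu,t)$ on $S^2$. This action is free, and a standard characteristic class computation shows that the resulting $S^2$-bundle over $S^2=S^3/S^1$ is nontrivial, i.e.\ its total space is $N$. By the Gray--O'Neill formula the submersion metric $g_0$ on $N$ has $\sec_{g_0}\geq0$. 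Moreover, left multiplication by $\SU(2)$ on the $S^3$-factor commutes with the Hopf circle, so it descends to an isometric $\SU(2)$-action on $(N,g_0)$; one checks that this action is of cohomogeneity one, with principal orbit $\SU(2)$ and two singular orbits $\Sigma_\pm\cong S^2$ lying over the two poles $(0,\pm1)$ of the $S^2$-factor.

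Next I would Cheeger deform $g_0$ with respect to this $\SU(2)$-action as in Section~\ref{sec:cheeger}, obtaining metrics $g_t$, $t>0$, with $\sec_{g_t}\geq0$. Since $\SU(2)$ is positively curved, Corollary~\ref{cor:so3} applies: any plane $\sigma$ that survives flat (up to the Cheeger reparametrization) has at most $1$-dimensional projection onto the vertical space $\v_p$. The technical core is to upgrade this to the precise analogue of Proposition~\ref{prop:geomcheeger}: \emph{every plane $\sigma\subset T_pN$ with $\oc_{g_t}(\sigma)=0$ has $p\in\Sigma_+\cup\Sigma_-$, and both $\sigma$ and $\sigma^\perp$ are transverse to the singular orbit through $p$} — equivalently, neither of them coincides with the tangent plane or the normal plane of $\Sigma_\pm$ at $p$. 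In particular no such ``doubly flat'' pair occurs away from $\Sigma_+\cup\Sigma_-$.

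Granting this, the second step is the first-order local conformal deformation of Proposition~\ref{prop:final}. Fix $t>0$, set $g:=g_t$, let $\psi_\pm=\dist_g(\,\cdot\,,\Sigma_\pm)^2$ (smooth near $\Sigma_\pm$), pick cutoffs $\chi_\pm$ supported in disjoint tubular neighborhoods of $\Sigma_\pm$ and identically $1$ near them, and set $\phi:=-(\chi_+\psi_++\chi_-\psi_-)$, $h:=\phi\,g$, $g_s:=g+s\,h$. By Proposition~\ref{prop:secondstep} combined with \eqref{eq:hess} and the transversality above, for every plane $\sigma$ based at $\Sigma_\pm$ with $\sec_g(\sigma)=0$ one has $\frac{\dd}{\dd s}\sec_{g_s}(\sigma)\big|_{s=0}>0$, since $\sigma$ has nonzero component normal to $\Sigma_\pm$; hence $\frac{\dd}{\dd s}\oc_{g_s}(\sigma)\big|_{s=0}>0$ whenever $\oc_g(\sigma)=0$. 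Applying Proposition~\ref{prop:cpct} to $f(s,\sigma)=\oc_{g_s}(\sigma)$ on the compact Grassmann bundle $\gr$ with $M=N$, we conclude $\oc_{g_s}>0$ for all sufficiently small $s>0$.

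The main obstacle is the curvature bookkeeping in the middle paragraph. Unlike for $S^2\times S^2$, where $g_0$ is the standard product and M\"uter's thesis directly lists all flat planes, here $g_0$ is itself a submersion metric, so one must carry out the Gray--O'Neill together with the Cheeger (Proposition~\ref{prop:mueter}) analysis inside the $S^3\times_{S^1}S^2$ model and the $\SU(2)$-action: ruling out flat vertizontal planes at principal points, and verifying at the singular orbits that the surviving flat planes --- together with their orthogonal complements --- meet $\Sigma_\pm$ transversally, just as the planes $\sigma_{\pi(x)}$ of \eqref{eq:sigmapi} are never tangent to $\pm\Delta S^2$. Everything else is a routine transcription of Sections~\ref{sec:cheeger}--\ref{sec:firstorder}; in fact the same argument yields $\sec^\theta_{g_s}>0$ for all $\theta$ below $\min_{p,\sigma}\dist(\sigma,\sigma^\perp)$, and $g_s$ can be taken arbitrarily $C^k$-close to $g_0$ by choosing $t,s$ small.
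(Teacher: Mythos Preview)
Your core argument for $\oc>0$ is correct and follows the same two ingredients as the paper (the submersion description $N=(S^3\times S^2)/S^1$ with its cohomogeneity one $\SU(2)$-action, followed by the first-order conformal deformation near the singular orbits). Two points, however, diverge from the paper and deserve correction.

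First, the Cheeger deformation step is superfluous and in fact does nothing here. As the paper notes (citing M\"uter~\cite[Satz 4.29]{mueter}), the Cheeger deformation of $(\C P^2\#\overline{\C P}^2,g_0)$ with respect to the $\SU(2)$-action destroys \emph{no} zero curvature planes: every $g_0$-flat plane already contains the $\SU(2)$-horizontal direction, so its vertical projection is degenerate and Corollary~\ref{cor:so3} yields no improvement. The paper therefore skips the Cheeger step and works directly with $g_0$: the flat planes of $g_0$ are exactly the submersion images of mixed planes in $S^3\times S^2$ orthogonal to the circle action field; at each regular point there is a circle's worth of them, all sharing the horizontal line, so any two intersect and cannot be orthogonal. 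Hence $\oc_{g_0}>0$ on the principal part already, and only the conformal deformation near $\Sigma_\pm$ is needed. Your inclusion of the Cheeger step is harmless for the $\oc$ conclusion, but your phrase ``ruling out flat vertizontal planes at principal points'' suggests you expect Cheeger to eliminate them; it does not.

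Second, your closing claim that the same argument yields $\sec^\theta_{g_s}>0$ is false. Because there is a full circle of flat planes at \emph{every} point of $(N,g_0)$ (and equally of $g_t$), for any small $\theta>0$ one finds, at each regular point, pairs of flat planes at Grassmannian distance $\geq\theta$ (intersecting, but not orthogonal). Thus $\sec^\theta_{g_t}$ already vanishes on the entire principal part, and a conformal deformation supported only near $\Sigma_\pm$ cannot repair this. The paper makes this explicit in the remark following the proof: the construction cannot produce $\sec^\theta>0$ on $\C P^2\#\overline{\C P}^2$.
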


\begin{proof}
Similarly to the $S^2\times S^2$ case, $\C P^2\#\overline{\C P}^2$ admits cohomogeneity one metrics with $\sec\geq0$ invariant under the action of $\SU(2)$. In order to describe this initial metric, notice that the normal bundle of the usual embedding $\C P^1\hookrightarrow\C P^2$ can be identified with the vector bundle $(S^3\times\R^2)/S^1$ over $\C P^1=S^3/S^1$. Take two copies of the disk bundles given as tubular neighborhoods of the zero section of this vector bundle. Each one of them is the complement of a metric ball on $\C P^2$, that is deleted to carry out the connected sum. It is then easy to see that
\begin{equation}
\C P^2\#\overline{\C P}^2=(S^3\times S^2)/S^1,
\end{equation}
by gluing along the boundary these two disk bundles. Here, the $S^1$-action on $S^3\times S^2$ is a product action, on $S^3$ via the Hopf action and on $S^2$ by rotation. The standard product metric on $S^3\times S^2$ then induces a submersion metric\footnote{We remark that this construction is very similar to the original gluing construction of nonnegatively curved metrics on the connected sum of two compact rank one symmetric spaces, which is due to Cheeger~\cite{cheeger} and was later greatly generalized by Grove and Ziller \cite{gzannals}. The only subtle difference is that $(\C P^2\#\overline{\C P}^2,g_0)$ has only one orbit that is a totally geodesic hypersurface (the boundary of the disk bundles glued together), while in the gluing construction the metric locally splits as a product near this hypersurface.}
 $g_0$ with nonnegative curvature on $\C P^2\#\overline{\C P}^2$. The cohomogeneity one action of $\SU(2)$ comes from the left-translation action of $\SU(2)=S^3$ on the first factor of $S^3\times S^2$, which induces an action on the quotient since it commutes with the above circle action.

Both singular orbits of this cohomogeneity one action on $(\C P^2\#\overline{\C P}^2,g_0)$ are $2$-spheres, that correspond to the zero section of the disk bundles that were glued together. The zero curvature planes are images via the submersion of mixed planes on $S^3\times S^2$ that are spanned by vectors orthogonal to the circle action field, cf. M\"uter~\cite[Satz 4.29]{mueter}. Thus, there is a circle's worth of zero curvature planes at every point, but any such planes tangent to a regular point \emph{must intersect}. At singular points, there are zero curvature planes orthogonal to each other, but all of them are not tangent to the singular orbit. This scenario is totally analogous to the Cheeger deformed metrics on $S^2\times S^2$, i.e., metrics obtained after the first step of our deformation (Proposition \ref{prop:geomcheeger}). More precisely, $\sec_{g_0}\geq0$, and $\oc_{g_0}>0$ on all regular points. Since the only points with zero biorthogonal curvature are along the singular orbits and all zero curvature planes are not tangent to these orbits, a first-order local conformal deformation using squared distance functions to the singular orbits, totally analogous to the one in Proposition~\ref{prop:final}, gives the desired metrics with $\oc>0$ on $\C P^2\#\overline{\C P}^2$ as a consequence of Proposition~\ref{prop:cpct}.
\end{proof}

\begin{remark}
As shown above, in order to construct metrics with $\oc>0$ on $\C P^2\#\overline{\C P}^2$, one can skip the first step in the construction for $S^2\times S^2$. This is an important observation, because differently from $S^2\times S^2$ with the standard product metric, the Cheeger deformation of $(\C P^2\#\overline{\C P}^2,g_0)$ with respect to the $\SU(2)$-action does not destroy \emph{any} zero curvature planes, see M\"uter~\cite[Satz 4.29]{mueter}.
\end{remark}

\begin{remark}
Since there is a circle's worth of zero curvature planes at every point on $(\C P^2\#\overline{\C P}^2,g_0)$, although the first-order local conformal deformation produces $\oc>0$, it cannot be used to produce metrics with $\sec^\theta>0$ for every $\theta>0$.
\end{remark}

\end{document}